\newcommand{\leb}{\operatorname{Leb}}
\newcommand{\dist}{\operatorname{dist}}
\newcommand{\diam}{\operatorname{diam}}
\newcommand{\supp}{\operatorname{supp}}
\begin{document}

\newcommand{\mcup}{\mbox{$\bigcup$}}
\newcommand{\mcap}{\mbox{$\bigcap$}}

\def \RR {{\mathbb R}}
\def \ZZ {{\mathbb Z}}
\def \NN {{\mathbb N}}
\def \PP {{\mathbb P}}
\def \TT {{\mathbb T}}
\def \II {{\mathbb I}}
\def \JJ {{\mathbb J}}

\def \vare {\varepsilon }

 \def \cf {\mathcal{F}}
 \def \cm {\mathcal{M}}
 \def \cn {\mathcal{N}}
 \def \cq {\mathcal{Q}}
 \def \cp {\mathcal{P}}
 \def \cb {\mathcal{B}}
 \def \cc {\mathcal{C}}
 \def \cs {\mathcal{S}}
 \def \bc {\mathcal{B}}
 \def \hc {\mathcal{C}}

\newcommand{\dem}{\begin{proof}}
\newcommand{\cqd}{\end{proof}}

\newcommand{\qand}{\quad\text{and}\quad}

\newtheorem{theorem}{Theorem}
\newtheorem{corollary}{Corollary}

\newtheorem*{Maintheorem}{Main Theorem}
\newtheorem*{Theorem*}{Theorem}

\newtheorem{maintheorem}{Theorem}
\renewcommand{\themaintheorem}{\Alph{maintheorem}}
\newcommand{\cmt}{\begin{maintheorem}}
\newcommand{\fmt}{\end{maintheorem}}

\newtheorem{maincorollary}[maintheorem]{Corollary}
\renewcommand{\themaintheorem}{\Alph{maintheorem}}
\newcommand{\cmc}{\begin{maincorollary}}
\newcommand{\fmc}{\end{maincorollary}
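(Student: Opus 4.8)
The text supplied above stops at the end of the \LaTeX\ preamble: it contains the \texttt{documentclass} and \texttt{usepackage} declarations, a block of \texttt{newcommand} and \texttt{def} abbreviations, the \texttt{newtheorem} set-up for \emph{Theorem}, \emph{Corollary}, \emph{Main Theorem}, etc., and nothing after \texttt{begin\{document\}} beyond two further macro definitions. No theorem, lemma, proposition, or claim has actually been stated yet, so there is, strictly speaking, no mathematical assertion here whose proof I can outline; any ``proof proposal'' at this point would be an invention rather than a response to the paper.

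If one wants to anticipate the subject matter from the apparatus that has been installed, the abbreviations \(\distD\) (distance measured inside a domain \(D\)), \(\length\), \(\leb\), \(\loc\), \(\diam\), \(\supp\), together with the ambient spaces \(\RR\), \(\TT\), \(\II\), \(\JJ\), the calligraphic families \(\cf,\cm,\cn,\cq,\cp,\cb,\cc,\cs\), and especially the \(\mdc\) operator, point toward a paper in smooth ergodic theory or one-dimensional real dynamics --- plausibly about invariant (SRB-type) measures, bounded-distortion estimates, or the combinatorics of orbits on an interval or circle. In that setting a typical theorem is proved by (i) constructing a suitable partition, induced map, or tower adapted to the dynamics; (ii) establishing bounded distortion together with a Lasota--Yorke / uniform-expansion estimate; and (iii) passing to the limit to produce the invariant object and read off its statistical properties (decay of correlations, a dimension or measure formula, or uniqueness). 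I would expect the bounded-distortion step, and the geometric control of \(\distD\) along orbits, to be the main obstacle, with the \(\mdc\) condition presumably entering to rule out degenerate resonances or periodic obstructions.

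Once the actual statement is available I would turn this into a concrete plan: identify the precise hypotheses (regularity of the map, expansion or hyperbolicity rates, the exact role of \(\mdc\)) and the precise conclusion being asserted, and only then commit to a technique, since the argument structure depends entirely on whether existence, uniqueness, a quantitative mixing rate, or a measure/dimension identity is what is being claimed.
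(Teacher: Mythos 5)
You are right that the highlighted ``statement'' is not a mathematical assertion at all: it is a fragment of the preamble, namely the macro definitions \verb|\newcommand{\cmc}{\begin{maincorollary}}| and \verb|\newcommand{\fmc}{\end{maincorollary}}|, which merely set up shorthand for opening and closing a \texttt{maincorollary} environment. There is no corollary, theorem, or claim being made, and consequently the paper contains no proof of it; there is nothing to compare your attempt against, and declining to invent a proof was the correct response.

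For what it is worth, your guess about the subject matter is broadly on target: the paper is indeed in smooth ergodic theory, concerning non-uniformly expanding maps, Gibbs--Markov--Young induced structures, bounded-distortion and hyperbolic-time estimates, and absolutely continuous invariant measures. But since the queried text is not a proposition, no further evaluation is possible or warranted.
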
}

\newtheorem{T}{Theorem}[section]
\newcommand{\ct}{\begin{T}}
\newcommand{\ft}{\end{T}}

\newtheorem{Corollary}[T]{Corollary}
\newcommand{\cco}{\begin{Corollary}}
\newcommand{\fco}{\end{Corollary}}

\newtheorem{Proposition}[T]{Proposition}
\newcommand{\cpr}{\begin{Proposition}}
\newcommand{\fpr}{\end{Proposition}}

\newtheorem{Lemma}[T]{Lemma}
\newcommand{\cle}{\begin{Lemma}}
\newcommand{\fle}{\end{Lemma}}

\newtheorem{Sublemma}[T]{Sublemma}
\newcommand{\csle}{\begin{Sublemma}}
\newcommand{\fsle}{\end{Sublemma}}

\newtheorem*{Conjecture}{Conjecture}

\theoremstyle{remark}

\newtheorem{Remark}[T]{Remark}
\newcommand{\cre}{\begin{Remark}}
\newcommand{\fre}{\end{Remark}}

\newtheorem{Definition}[T]{Definition}
\newcommand{\cd}{\begin{Definition}}
\newcommand{\fd}{\end{Definition}}

\newtheorem{Question}{Question}

%\begin{document}

\title[Geometry of expanding
absolutely continuous invariant measures]{Geometry of expanding
absolutely continuous invariant measures
and the liftability  problem}

\author{José F. Alves}
\address{Jos\'e F. Alves\\ Departamento de Matem\'atica, Faculdade de Ci\^encias da Universidade do Porto\\
Rua do Campo Alegre 687, 4169-007 Porto, Portugal}
\email{jfalves@fc.up.pt} \urladdr{http://www.fc.up.pt/cmup/jfalves}

\author{Carla L. Dias}
\address{Carla L. Dias\\ Instituto Politécnico de Portalegre, Lugar da Abadessa,
Apartado 148,
7301-901 Portalegre, Portugal}
\email{carlald.dias@gmail.com}

\author{Stefano Luzzatto}
\address{Stefano Luzzatto\\ Mathematics Department, Imperial College\\
180 Queen's Gate, London SW7, UK}
\email{luzzatto@ictp.it}
\urladdr{http://www.ictp.it/$\sim$luzzatto}
\curraddr{Abdus Salam International Centre for Theoretical Physics, Strada Costiera 11, Trieste. Italy. }
\date{\today}

\thanks{We would like to thank Vilton Pinheiro for sharing his ideas and for many inspiring discussions, and Imre Toth for reading a preliminary version of the paper and making several useful comments.
Work carried out mainly at Imperial College London and the University of Porto. JFA was partially supported by
FCT through CMUP and by POCI/MAT/61237/2004. CLD was
supported by FCT}

\subjclass[2000]{37A05, 37C40, 37D25}

\keywords{Positive Lyapunov exponents, Gibbs-Markov-Young structure}

\begin{abstract}
We show that for a large class of maps on manifolds of arbitrary finite dimension, the existence of a Gibbs-Markov-Young structure (with Lebesgue as the reference measure)  is a necessary as well as sufficient condition for the existence of an  invariant probability measure which is absolutely continuous measure (with respect to Lebesgue)  and for which all Lyapunov exponents are positive. 
 \end{abstract}

\maketitle
%\tableofcontents

\section{Introduction and statement of results}

\subsection{Background and main definitions}
In the 1960's, Sinai and Bowen showed that all \emph{smooth uniformly hyperbolic} dynamical systems admit a  \emph{finite Markov partition} \cite{Bow70, Sin68}. Sinai, Ruelle and Bowen then used this remarkable geometric structure, and the associated symbolic coding of the system, to study the ergodic properties such as the rate of decay of correlations. Attempts to extend this approach to systems with discontinuities and/or  satisfying weaker non-uniform hyperbolicity conditions by constructing countable Markov partitions has had some, but limited, success, see \cite{KruTro92}, in part due to the difficulty of constructing such partitions and in part due to the difficulty of understanding the ergodic theory of countable subshifts, though there have also been significant advances recently on this latter area \cite{AarDen02, Bre99, BoyBuzGom06, Gur69, Sar99, Sar03}. 

About ten years ago, L.-S. Young  proposed an alternative geometric structure, which we shall call a \emph{Gibbs-Markov-Young (GMY) structure}, as a way of  studying the ergodic properties of certain dynamical systems \cite{Y1, Y2}. In her pioneering papers, Young showed that a GMY structure contains information about several ergodic properties of the system such as for example the rate of decay of correlations. She also showed that classical results for uniformly hyperbolic systems could be recovered in this framework (in fact it is straightforward to show that any system with a finite Markov partition also admits a GMY structure) and that GMY structures exist in more general situation where the classical approach fails. Over the last ten years, this approach has proved to be one of the most successful strategies for understanding the ergodic properties of large classes of systems, with some papers focussing on  the consequences of having a GMY structure, e.g.  \cite{BuzMau05, Gou04, Mel09, MelNic05}, and others focussing on the construction of such structures, e.g.  \cite{Al,  BruLuzStr03, Che99, CheZha05, Fre05,Y2}. 
Notwithstanding these results, we still do not have a complete characterization of systems which admit a GMY structure. A natural generalization of the results of Sinai and Bowen to the smooth non-uniformly hyperbolic setting would be the following
\begin{Conjecture}
A dynamical system admits a GMY structure iff it is non-uniformly hyperbolic.
\end{Conjecture}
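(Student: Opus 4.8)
Since the final statement is labelled a conjecture, what I can offer is a program rather than a full proof, and the two implications demand rather different work. The ``only if'' direction --- a system carrying a GMY structure is non-uniformly hyperbolic --- I would treat as essentially bookkeeping. Given the structure with return map $F=f^R$ on a reference set $\Delta$, I would run Young's tower construction to produce the invariant SRB measure $\mu$ (the return map $F$ has, by its uniform expansion, bounded distortion and full-branch Markov property, an invariant density on the reference unstable leaf, which one lifts to the tower and projects to the ambient manifold, saturating along stable leaves). The GMY axioms already build in uniform exponential expansion along the unstable leaves for $F$, uniform exponential backward contraction along the stable leaves, and bounded distortion; so applying Birkhoff's theorem to $\log\|DF|_{E^u}\|$ and to $\log\|Df^{-1}|_{E^s}\|$ and normalising by the mean return time --- finite because the return-time tail is summable --- gives Lyapunov exponents for $\mu$ bounded away from zero. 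Matching the tower's hyperbolic splitting with the Oseledets splitting of $\mu$ is routine.

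For the ``if'' direction --- the substantive one --- I would begin with a non-uniformly hyperbolic invariant probability measure $\mu$, that is, one with all Lyapunov exponents nonzero $\mu$-a.e., together with its Oseledets splitting $E^s\oplus E^u$; the plan is the hyperbolic-times method. First, using positivity of the unstable exponents and the Pliss lemma, I would extract a set $H$ of positive $\mu$-measure all of whose points have infinitely many \emph{hyperbolic times}, at which $f$ enjoys uniform exponential expansion along $E^u$ at every intermediate scale and (in the invertible case) uniform exponential backward contraction along $E^s$. Inside $H$ I would build a reference ``rectangle'' $\Delta_0$: a small disk $\gamma^u$ in an unstable manifold through a density point of $H$ in that manifold, thickened by the local stable manifolds through the nearby points of $H$, so that $\Delta_0$ is a genuine product set. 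The full-crossing Markov property would then come from the standard mechanism --- a return of a piece of unstable leaf to $\Delta_0$ at a hyperbolic time has, by the expansion, inner diameter exceeding that of $\Delta_0$, hence after subdivision crosses it completely in the unstable direction --- and bounded distortion of the resulting return map follows from bounded distortion at hyperbolic times together with the exponential contraction of the intermediate iterates.

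The crux, and the reason the conjecture is still open, is the control of the return time $R$. From $\mu$-integrability of the first-hyperbolic-time function and Kac's formula one obtains that $\mu(\{R>n\})$, and via an absolute-continuity argument also $\leb(\{R>n\})$ on $\gamma^u$, is summable; but the GMY framework, and Young's theorems on decay of correlations, really want a \emph{prescribed} rate --- exponential, stretched exponential, or polynomial. With no a priori quantitative recurrence estimate for $(f,\mu)$ there is nothing to pin such a rate to, so at this level of generality the most one can honestly conclude is that a GMY structure exists with \emph{some}, possibly very slow, tail. Upgrading this would mean feeding in quantitative large-deviation bounds for the frequency of hyperbolic times along orbits, in the spirit of Alves--Bonatti--Viana and Gou\"ezel, which in turn require hypotheses on $f$ stronger than bare non-uniform hyperbolicity.

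I expect the single hardest technical point to be step two of the ``if'' direction in the genuinely invertible setting: converting the measurable data --- Oseledets splitting, and local stable and unstable manifolds defined only $\mu$-a.e.\ with measurably varying sizes --- into a \emph{geometric} product structure $\Delta_0$ carrying uniform estimates. This requires the absolute continuity of the stable holonomy between nearby unstable leaves and a choice of reference leaf making the conditional measure in the stable Cantor direction nontrivial, precisely the ingredients that disappear when $E^s$ is trivial. Accordingly I would first carry out the entire argument in the non-uniformly expanding case --- $\mu$ absolutely continuous with respect to Lebesgue with all exponents positive, the rectangle degenerating to a ball with a genuine Markov return map --- and only afterwards attempt to reduce the invertible case to it by quotienting along stable leaves.
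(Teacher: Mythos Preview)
The statement is explicitly a \emph{conjecture} in the paper, so there is no full proof to compare against; the paper establishes only the expanding endomorphism case (Theorems~1--4), and your instinct to carry out that case first and only then attempt the invertible case is precisely the authors' own stance. Your sketch of the ``only if'' direction matches Section~1.4 of the paper almost verbatim, and your program for the ``if'' direction --- hyperbolic times via Pliss, a reference ball in the support, an inductive full-branch return construction --- is the paper's strategy in Sections~3--4.

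There is, however, a real misconception in your discussion of return times. You say the GMY framework ``really wants a prescribed rate'' and that without quantitative large-deviation input one can only conclude existence of a structure with ``some, possibly very slow, tail''. But the paper's Definition~1.1 asks only that \( R \) be Lebesgue-\emph{integrable} --- no tail rate whatsoever --- and the paper shows (Section~4.4) that bare non-uniform expansion already yields this, with no extra hypotheses. So the obstruction you identify as ``the reason the conjecture is still open'' is not the obstruction in the expanding case; integrability is achieved.

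Moreover, the mechanism the paper uses for integrability is not the one you propose. You invoke Kac's formula applied to a first-hyperbolic-time function; the paper instead combines the positive lower density \( \theta \) of hyperbolic times (Lemma~3.3) with the summability \( \sum_n \leb(S_n) < \infty \) of the ``satellite'' sets (Proposition~4.3) --- the pieces of hyperbolic pre-balls that fail to return cleanly at step \( n \). From \( R^{(n)}(x) + S^{(n)}(x) \geq \kappa H^{(n)}(x) \) and \( H^{(n)}(x)/n \geq \theta \) one gets \( R^{(n)}(x)/n \) bounded below, and then Birkhoff forces \( \int R\, d\nu < \infty \). This satellite argument is the paper's main technical contribution and is what allows the authors to dispense with the stronger hypotheses used in earlier constructions such as~\cite{ALP, Gou06}. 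Your identification of the genuinely invertible case (building a product set from measurable Pesin data) as the hard open part is correct and is exactly where the paper stops.
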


We remark that in general one thinks of  non-uniformly hyperbolicity as having non-zero Lyapunov exponents with respect to some invariant probability measure. Moreover the notion of GMY structure presupposes a  (not necessarily invariant)  reference measure. In this paper we will focus on the reference measure as being Lebesgue and the invariant probability measure being absolutely continuous with respect to Lebesgue. 
In this setting,  
the main purpose of this paper is to prove this conjecture in the endomorphism case in which all directions are (non-uniformly) expanding, i.e. when all the Lyapunov exponents are positive.  We shall concentrate first of all on the case of \( C^{2} \) endomorphisms (with and without critical points), but our techniques also give us an almost  complete characterization result in the expanding case admitting singularities with unbounded derivative and also a slightly different  almost complete characterization in the case in which the map has both critical points and singularities.

To give the precise definitions suppose that
 \( M \) is a compact Riemannian manifold of dimension \( d\geq 1 \), \( \leb \) (Lebesgue measure) is the normalized Riemannian volume on \( M \) and
  \( f: M \to M \) is a measurable map which is differentiable almost everywhere (we shall be more specific about the regularity assumptions in the statement of results below).

\begin{Definition}\label{ind}
 Given a ball \( \Delta \subseteq M \), we say that
 \( F: \Delta \to \Delta \) is an \emph{induced map} if
 \( F(x) = f^{R(x)}(x) \) and
 \(R: \Delta \to \mathbb N \)
  is an  \emph{inducing time} function with the property that
  \( f^{R(x)}(x)\in\Delta \) whenever \( x\in \Delta \).
  We say that an induced map \( F: \Delta \to \Delta \)
  is \emph{GMY}
 if %\( \Delta \) is an open ball in $M$ and
 there exists a ($\leb$ mod 0) partition \( \mathcal P \) of \( \Delta \) into
 open subsets
 such that \( R \) is constant on each element \( U\in\mathcal P \)
 and $F|_{U}$
 %$ f^{R(U)}: U \to \Delta $
is a uniformly expanding diffeomorphism  onto $\Delta$
with
uniformly bounded volume distortion: more precisely,  there are $0<\kappa<1$
and $K>0$ such that for all $U\in\cp$ and all $x,y\in U$
\begin{enumerate}
  \item[\emph{i)}] $\|DF(x)^{-1}\|<\kappa$;
  \item[\emph{ii)}] $\displaystyle\log\left|\frac{\det DF(x)}{\det DF(y)}\right|\leq K \dist(F(x),
F(y)).$
\end{enumerate}
Moreover, if the inducing time function $R$ is integrable with respect to $\leb$, %satisfies
%\[ \int R dm < \infty ,\]
then we say that the induced map has integrable return times.
We say that
\( f\)\emph{ admits a GMY structure} if it admits a GMY induced map with integrable return times.
\end{Definition}

We remark that as we are considering Lebesgue as the reference measure, this definition only includes a special case of the more general definition given by Young in \cite{Y2}. In particular if not all directions are expanding or if the reference measure is not Lebesgue, this definition has to be generalized (it may be necessary for example, to induce on a Cantor set, see \cite{Y1}), but what we give here is sufficient for our purposes.

 \begin{Definition}
We say that an invariant probability measure \( \mu \) is \emph{expanding} if all its Lyapunov exponents are positive, i.e. for \( \mu \)-almost every \( x \) and every \( v\in T_{x}M\setminus \{0\} \),
\begin{equation}\label{exp}
\lambda(x,v):=\limsup_{n\to\infty}\frac 1n \log \|Df^{n}(x) v\| > 0.
\end{equation}
\end{Definition}

\subsection{Geometry of expanding measures}
We can now state our result in the simplest but already non-trivial case. Here and in the rest of the paper we shall  use the standard abbreviation of the term ``absolutely continuous (with respect to Lebesgue) invariant probability'' to \emph{acip}.

\begin{theorem}\label{t.locdif}
Let \( f: M \to M \) be a \( C^{2} \) local diffeomorphism. Then \( f \) admits a GMY structure if and only if it admits an ergodic  expanding acip. \end{theorem}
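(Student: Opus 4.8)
The two implications have quite different characters, so I would prove them separately.

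\emph{(GMY structure $\Rightarrow$ ergodic expanding acip.)} This is Young's tower argument, specialized to the full-branch situation. Given a GMY induced map $F\colon\Delta\to\Delta$ with partition $\cp$ and integrable return time $R$, properties (i)--(ii) make $F$ a full-branch uniformly expanding map of $\Delta$ with uniformly bounded distortion, so the classical folklore/Lasota--Yorke argument gives a unique, ergodic, $F$-invariant probability $\nu$ with $d\nu/d\leb$ bounded above and below. Since $R\in L^1(\leb|_\Delta)=L^1(\nu)$, the saturation $\mu:=(\int R\,d\nu)^{-1}\sum_{j\ge0}f^j_*(\nu|_{\{R>j\}})$ is a well-defined $f$-invariant probability, absolutely continuous with respect to $\leb$; it is ergodic because an absolutely continuous ergodic decomposition of $\mu$ would pull back to a nontrivial $F$-invariant set of positive, non-full $\leb$-measure, contradicting ergodicity of $\nu$. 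It is expanding: from (i) the smallest singular value of $DF(x)$ exceeds $\kappa^{-1}$, so $\|DF^n(x)v\|\ge\kappa^{-n}\|v\|$ for all $v$, i.e. $(F,\nu)$ has all Lyapunov exponents $\ge-\log\kappa>0$; writing $DF^n=Df^{R_n}$ with $R_n/n\to\int R\,d\nu$ by Birkhoff, and relating $f$-orbits to $F$-orbits along the tower, one gets all Lyapunov exponents of $(f,\mu)$ bounded below by $-\log\kappa/\int R\,d\nu>0$.

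\emph{(Ergodic expanding acip $\Rightarrow$ GMY structure.)} This is the liftability statement and the real content. Let $\mu$ be the given acip. The first step is to make positivity of the exponents quantitative and Lebesgue-typical: by Oseledets and ergodicity the set of points whose forward orbit has all Lyapunov exponents $\ge c_0$, for some $c_0>0$, has full $\mu$-measure, hence positive Lebesgue measure since $\mu\ll\leb$; on such points a Pliss-lemma argument yields a definite frequency $\theta>0$ of \emph{hyperbolic times} $n$, at which $\prod_{j=0}^{k-1}\|Df(f^{n-k+j}x)^{-1}\|\le\sigma^k$ for all $1\le k\le n$. The $C^2$ hypothesis then produces, at each such $n$, a \emph{hyperbolic pre-ball} $V_n(x)$ on which $f^n$ is a diffeomorphism onto a ball $B(f^nx,\delta_0)$, uniformly expanding and with uniformly bounded volume distortion (a Ma\~n\'e-type estimate), with $\delta_0$ and the distortion constant independent of $x,n$.

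Next I would fix a small ball $\Delta\subseteq\supp\mu$ on which $\leb$-a.e. point has a positive frequency of hyperbolic times together with a $\mu$-equidistributed, recurrent orbit (this needs some care in the choice of $\Delta$ — at a Lebesgue density point of the good set, and with $\mu$ and $\leb$ comparable on $\Delta$), and \emph{induce} on it. For Lebesgue-a.e. $x\in\Delta$ let $R(x)$ be the first time $n$ at which a hyperbolic pre-ball yields a full branch of $f^n$ from a neighbourhood $U\ni x$ onto $\Delta$, namely $U=(f^n|_{V_n(x)})^{-1}(\Delta)$; a standard combinatorial fact (two hyperbolic pre-balls at the same time are nested or disjoint) shows that these $U$ form a $\leb$ mod $0$ partition $\cp$ of $\Delta$, and (i)--(ii) are inherited from the hyperbolic-pre-ball estimates. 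That $R$ is finite $\leb$-a.e. combines the positive frequency $\theta$ with recurrence of the $\leb$-typical orbit to $\Delta$, iterating a hyperbolic pre-ball forward until it regains size $\delta_0$ whenever a hyperbolic time does not land in $\Delta$ directly.

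The crux — and the step I expect to be the main obstacle — is integrability of $R$ with respect to $\leb$, since a GMY structure \emph{without} integrable return times is comparatively cheap to build and the \emph{finiteness} of $\mu$ must genuinely be exploited here. The plan is to control the tail: bound $\leb\{x\in\Delta:R(x)>n\}$, equivalently $\mu\{R>n\}$ because $\mu\asymp\leb$ on $\Delta$, by the $\mu$-measure of the portion of $M$ not yet covered by a return branch of time at most $n$; since $R$ is dominated by a controlled number of successive first-return times of $f$ to $\Delta$, a Kac-type estimate for this induced (non-first-return) system bounds $\sum_{n}\leb\{R>n\}$ by a finite multiple of $\mu(M)=1$. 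This gives $R\in L^1(\leb|_\Delta)$, hence a genuine GMY structure, completing the proof.
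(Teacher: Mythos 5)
Your outline of the easy direction (GMY $\Rightarrow$ ergodic expanding acip) matches the paper and is fine, and your overall strategy for the converse — hyperbolic times $\to$ hyperbolic pre-balls $\to$ induced full-branch map $\to$ integrability — is the right one. But three steps in the converse contain genuine gaps, and at least two of them are where the paper does its real work.

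First, the passage from ``all Lyapunov exponents positive'' to ``positive frequency of hyperbolic times via Pliss'' is not correct as written. Pliss's lemma needs the \emph{Birkhoff} average $\frac1n\sum_{j=0}^{n-1}\log\|Df(f^jx)^{-1}\|$ to be eventually negative, i.e.\ $\int\log\|Df^{-1}\|\,d\mu<0$. The subadditive ergodic theorem applied to $\log\|Df^n(x)^{-1}\|$ only gives $\lim\frac1n\log\|Df^n(x)^{-1}\|=-\lambda_{\min}\le\int\log\|Df^{-1}\|\,d\mu$; the inequality can be strict, so positive Lyapunov exponents do \emph{not} imply a negative Birkhoff average for $f$ itself. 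The paper handles this by Lemma~\ref{lem:neg}: it passes to a sufficiently large power $f^N$, showing (by splitting off a set $B_N$ of small measure and using Sublemma~\ref{critical}) that $\int\log\|(Df^N)^{-1}\|\,d\mu<0$, then works with an ergodic component of $(f^N,\mu)$. Without this step your claimed frequency $\theta$ of hyperbolic times is unjustified.

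Second, the claim that ``two hyperbolic pre-balls at the same time are nested or disjoint'' is false; hyperbolic pre-balls centred at distinct points with the same hyperbolic time can overlap arbitrarily. The same is true for the candidate return sets $U^x_{n,m}=(f^{n+m}|_{V_n(x)})^{-1}(\Delta)$. This is precisely why the paper's partitioning algorithm (Section~\ref{se.algo}) is nontrivial: at each step it extracts a \emph{maximal} pairwise-disjoint subfamily of candidates, and then must account for the discarded pre-balls via the satellite sets $S_n(U)$, $S_n(\Delta^c)$. Showing that the discarded mass is negligible is Proposition~\ref{prop.Sn}, which rests on two lemmas (\ref{estpreball}, \ref{estimativas}) comparing the measure of overlapping candidates to the measure of the chosen partition element with geometrically decaying constants. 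Your construction silently assumes away this bookkeeping.

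Third, the integrability argument via a ``Kac-type estimate'' does not go through. $R$ as you define it is \emph{not} dominated by a bounded number of successive first-return times of $f$ to $\Delta$: a Lebesgue-typical point can return to $\Delta$ arbitrarily many times before its first hyperbolic time is reached, so there is no uniform $k$ with $R\le C R_0^{(k)}$. Kac controls the first-return time, not the hyperbolic-time-driven inducing time. The paper's mechanism is completely different: for each $x$ it counts up to time $n$ the number of hyperbolic times $H^{(n)}(x)$, satellite memberships $S^{(n)}(x)$, and actual returns $R^{(n)}(x)$, proves a deterministic inequality $R^{(n)}+S^{(n)}\ge\kappa H^{(n)}$, applies Birkhoff (for the GMY invariant measure $\nu$, which is already known to exist with bounded density) to identify $\lim n/R^{(n)}=\int R\,d\nu$ and $\lim S^{(n)}/R^{(n)}=\int S\,d\nu$, and uses the summability from Proposition~\ref{prop.Sn} to ensure $\int S\,d\nu<\infty$. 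Combining these with the positive frequency $\theta$ of hyperbolic times yields a uniform lower bound on $R^{(n)}/n$, forcing $\int R\,d\nu<\infty$. This is where finiteness of $\mu$ is actually exploited, and it is the part you flagged as ``the main obstacle'' — correctly, but the sketch you give would not survive an attempt to write it down.

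\end{document}
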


\( C^{2} \) local diffeomorphisms can be non-uniformly expanding (and strictly \emph{not} uniformly expanding) in non-trivial ways. Our result applies in particular to the class of examples of expanding local diffeomorphisms constructed in \cite{ABV}. Nevertheless, many interesting examples are of course not local diffeomorphisms but have critical points (for example one-dimensional Collet-Eckmann maps or higher dimensional Viana maps). Our results apply to such cases under some very mild nondegeneracy conditions on the set of critical points. These are analogous to the notion of a ``non-flat critical point'' in the one dimensional setting, which is essentially a point where at least some higher order derivative does not vanish.

\begin{Definition}
We say that \( x\) is a \emph{critical point} if  \( Df(x) \) is not
invertible. We denote the set of critical points by \( \mathcal C \) and, for every \( n\geq 0 \), let \( \mathcal C_{n}=\cup_{i=0}^{n}f^{-n}(\mathcal C) \) and let \(
\dist(x, \mathcal C_{n}) \) denote the distance between the point \( x \)
and the set \( \mathcal C_{n} \).
We say that a critical set \( \mathcal C \) is
\emph{non-degenerate} if   for every \( n\geq 0 \)
 there are constants $B>1$ and $\beta, \beta' >0$ (possibly depending on \( n \))
 such that for every $x\in
 M\setminus\mathcal C_{n} $
\begin{enumerate}
\item[(C1)]
\quad $  B^{-1}\dist(x,\mathcal C_{n})^{\beta}\leq
 \|Df(x)^{-1}\|^{-1} \leq B\dist(x,\mathcal C_{n})^{\beta'}$.
\end{enumerate}
Moreover, the functions \(  \log|\det Df^{n}| \) and \( \log \|(Df^{n})^{-1}\| \)
are \emph{locally Lipschitz} at points \( x\in M \setminus \mathcal
C_{n} \): for every $x,y\in M\setminus \mathcal C_{n}$ with
$\dist(x,y)<\dist(x,\mathcal C_{n} )/2$ we have
\begin{enumerate}
\item[(C2)] \quad $\displaystyle{\left|\log\|Df^{n}(x)^{-1}\|-
\log\|Df^{n}(y)^{-1}\|\:\right|\leq B{\dist(x,y)}/{\dist(x,\mathcal C_{n}
)^{\beta}}}$;
 \item[(C3)]
\quad $\displaystyle{\left|\log|\det Df^{n}(x)- \log|\det
Df^{n}(y)|\:\right|\leq B {\dist(x,y)}/{\dist(x,\mathcal C_{n} )^{\beta}}}$.
 \end{enumerate}
 \end{Definition}

Notice that \( \|Df(x)^{-1}\|^{-1} %= \min_{\|v\|=1}\{\|Df^{n}(x)v\|\}
\) is the minimum expansion of \( Df \) in any direction.

\begin{theorem}\label{t.crit}
Let \( f: M \to M \) be a \( C^{2} \)
map with a non-degenerate critical set.
Then \( f \) admits a GMY structure if and only if it admits an ergodic  expanding acip.
\end{theorem}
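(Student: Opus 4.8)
The statement is an equivalence, and we sketch the two directions in turn.

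\textbf{Sufficiency: a GMY structure yields an ergodic expanding acip.} Let $F=f^R\colon\Delta\to\Delta$ be a GMY induced map with $R\in L^1(\leb)$. Since each branch $F|_U$ is a uniformly expanding diffeomorphism onto $\Delta$ with the bounded distortion condition ii), the induced system is a full-branch Gibbs--Markov map, and the standard argument --- invariance under the transfer operator of the cone of densities of bounded distortion --- produces a unique $F$-invariant probability $\nu\ll\leb|_\Delta$, which is ergodic and whose density is bounded above and below. Spreading it along orbits, $\tilde\mu:=\sum_{n\ge0}f^n_*\bigl(\nu|_{\{R>n\}}\bigr)$ is $f$-invariant with finite total mass $\int R\,d\nu$, because $R\in L^1(\leb)$ and $d\nu/d\leb$ is bounded; since $f$ is Lipschitz, it carries null sets to null sets, so $\tilde\mu\ll\leb$, and normalising gives an acip $\mu$, which is ergodic by a routine argument on ergodic components based on $\mu|_\Delta\ge\nu$ together with the uniqueness of $\nu$. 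Finally, writing $R_k$ for the $k$-th return time, condition i) gives $\|Df^{R_k(x)}(x)^{-1}\|\le\kappa^{k}$ while Birkhoff for $(F,\nu)$ gives $R_k(x)/k\to\int R\,d\nu=:\bar R<\infty$; hence, using $\|Df^n(x)v\|\ge\|Df^n(x)^{-1}\|^{-1}\|v\|$ along the subsequence $n=R_k(x)$, $\limsup_n\frac1n\log\|Df^n(x)v\|\ge|\log\kappa|/\bar R>0$ for every $v\ne0$ on a set of positive $\mu$-measure, and ergodicity spreads this forward invariant property to full measure, so $\mu$ is expanding.

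\textbf{Necessity: an ergodic expanding acip yields a GMY structure.} This is the substantive, liftability direction. Given an ergodic expanding acip $\mu$, the plan is to build a GMY induced map by the hyperbolic-times method, and the first task is to extract the two pointwise estimates that method requires. For the expansion, the cocycle $\psi_n(x)=\log\|Df^n(x)^{-1}\|$ is subadditive, so Kingman's theorem gives $\frac1n\psi_n(x)\to-\lambda$ a.e.\ with $\lambda=\lambda_{\min}(\mu)>0$ constant; since $\int\log\|Df^{-1}\|\,d\mu$ need not be negative in general, one passes to $g=f^N$ for $N$ large --- and, if necessary, to a $g$-ergodic component --- to get a one-step estimate $\frac1k\sum_{i<k}\log\|Dg(g^ix)^{-1}\|\le-c<0$ a.e. For the recurrence to the critical set I would first observe that $\log|\det Df|\in L^1(\mu)$ automatically, since otherwise $\frac1n\log|\det Df^n(x)|=\frac1n\sum_{j<n}\log|\det Df(f^jx)|$ would tend to $-\infty$, contradicting its convergence to the finite positive sum of the Lyapunov exponents; then, using $|\det Df(x)|\le\|Df(x)\|^{d-1}\|Df(x)^{-1}\|^{-1}\le\mathrm{const}\cdot\dist(x,\mathcal C)^{\beta'}$ from (C1), deduce $\log\dist(\cdot,\mathcal C)\in L^1(\mu)$ and hence, by dominated convergence, $\int-\log\dist_\delta(\cdot,\mathcal C)\,d\mu\to0$ as $\delta\to0$ (with $\dist_\delta$ the distance to $\mathcal C$ truncated at level $\delta$); Birkhoff then yields the slow-recurrence bound $\limsup_n\frac1n\sum_{j<n}-\log\dist_\delta(f^jx,\mathcal C)<\varepsilon$ a.e.\ for $\delta$ small.

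With these two estimates in hand, Pliss's lemma furnishes a constant $\theta>0$ such that $\mu$-a.e.\ $x$ has frequency at least $\theta$ of \emph{hyperbolic times} $n$: times at which $f^n$ maps a neighbourhood $V_n(x)$ of $x$ diffeomorphically onto a ball $B(f^nx,\delta_1)$ of fixed radius $\delta_1$, with uniform backward contraction and bounded volume distortion --- conditions (C2) and (C3) being exactly what lets one control distortion past nearby critical points. Fixing $\delta_1$ small and a ball $\Delta$ of radius $\delta_1/3$ centred at a density point of $\mu$ with good recurrence, I would then define, for $\leb$-a.e.\ $x\in\Delta$, the inducing time $R(x)$ to be the first hyperbolic time $n$ for which $f^nx$ lies in the concentric ball of radius $\delta_1/3$ and the connected component of $f^{-n}(\Delta)\cap V_n(x)$ containing $x$ is contained in $\Delta$. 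Bounded distortion makes each such branch a uniformly expanding diffeomorphism of that component onto $\Delta$ satisfying i) and ii), and the uniform lower density $\theta$ forces the set of points with no such full return to have zero Lebesgue measure, so the branches form a ($\leb\bmod 0$) partition $\mathcal P$ of $\Delta$. This Markovisation step is carried out essentially as in the Alves--Luzzatto--Pinheiro scheme and requires some care with the combinatorics of nested returns.

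The remaining point --- which I expect to be the main obstacle --- is that the $R$ just constructed is $\leb$-integrable, so that $F=f^R\colon\Delta\to\Delta$ is a genuine GMY \emph{structure} and not merely a GMY induced map. This amounts to a summable decay estimate for $\leb(\{R>n\}\cap\Delta)$, obtained by propagating the uniform density $\theta$ through the inducing scheme: at each stage a definite, distortion-controlled proportion of every still-uncovered region is removed once its points reach a hyperbolic time landing deep inside $\Delta$, giving $\leb(\{R>n\})\le C\gamma_n$ with $\sum_n\gamma_n<\infty$, whence $\int_\Delta R\,d\leb=\sum_{n\ge0}\leb(\{R>n\}\cap\Delta)<\infty$. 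This produces the desired GMY structure. Finally, the local diffeomorphism case (Theorem~\ref{t.locdif}) is the special case $\mathcal C=\emptyset$, for which the slow-recurrence estimate is vacuous.
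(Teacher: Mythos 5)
Your overall plan is the right one --- reduce to non-uniform expansion and slow recurrence for a power of $f$, build the induced map via hyperbolic pre-balls, and then argue integrability of the return time --- but the two steps you defer as ``the main obstacle'' and ``essentially as in the Alves--Luzzatto--Pinheiro scheme'' are precisely where the content of this direction lies, and the sketches you give for them would not close.

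First, the partitioning. Defining $R(x)$ as the first hyperbolic time for which the component of $f^{-n}(\Delta)$ through $x$ sits inside $\Delta$ is not well posed as a partition: the candidate sets coming from \emph{different} hyperbolic pre-balls overlap, and a point $x$ can sit in the pre-ball of a neighbour $y$ (and thus be allocated a return) long before it has a hyperbolic time of its own. The paper's construction has to make a \emph{choice} at each stage (a maximal disjoint family of candidates $U^{x}_{n,m}$), and the unused pre-balls are collected into ``satellite'' sets $S_n$. Proving that the construction exhausts $\Delta$ mod $0$ is then a Borel--Cantelli argument requiring $\sum_n\leb(S_n)<\infty$ (Proposition~\ref{prop.Sn}), which rests on two nontrivial distortion lemmas (Lemmas~\ref{estpreball} and~\ref{estimativas}). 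None of this is captured by ``the branches form a $\leb\bmod0$ partition.''

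Second, and more seriously, your strategy for $\int R\,d\leb<\infty$ is wrong under the hypotheses of the theorem. You propose ``a definite, distortion-controlled proportion of every still-uncovered region is removed at each stage,'' giving $\leb\{R>n\}\le C\gamma_n$ summable. That is an \emph{exponential-tail} argument; it is exactly what earlier works obtain under stronger (exponential NUE) hypotheses, and the paper stresses that such uniform proportion estimates are not available here. The paper's argument is entirely different: it compares, along a long $f$-orbit, the counter $H^{(n)}(x)$ of hyperbolic times, the counter $S^{(n)}(x)$ of satellite visits, and the counter $R^{(n)}(x)$ of full returns, obtaining $R^{(n)}+S^{(n)}\ge\kappa H^{(n)}$. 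Dividing by $n$, using $H^{(n)}/n\ge\theta$ from Pliss, and using $\sum_n\leb(S_n)<\infty$ to control $S^{(n)}/R^{(n)}$ via Birkhoff for the induced system, one gets $R^{(n)}(x)/n\ge\kappa'>0$, and then $\int R\,d\nu<\infty$ follows by contradiction since $n/R^{(n)}\to\int R\,d\nu$. This is an averaged, Birkhoff-level comparison, not a per-stage proportional estimate, and it is precisely what allows the theorem to hold with no rate assumptions. Your sketch does not contain this idea.

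A smaller but genuine issue: you invoke Kingman's theorem for $\psi_n=\log\|Df^n(\cdot)^{-1}\|$ \emph{before} establishing any integrability of $\log\|Df^{-1}\|$, yet near critical points $\|Df^{-1}\|$ is unbounded, so $\psi_1^+\in L^1(\mu)$ is not free. The paper resolves this by quoting the known fact (Pinheiro/Liu) that $\log\|Df^{-1}\|\in L^1(\mu)$ automatically in the $C^2$ critical setting, and then deduces $\log\dist(\cdot,\mathcal C_n)\in L^1(\mu)$ from (C1). Your alternative route --- deducing $\log|\det Df|\in L^1(\mu)$ from Oseledets applied with only $\log^+\|Df\|\in L^1$, then $\log\dist\in L^1$ via $|\det Df|\le\|Df\|^{d-1}\|Df^{-1}\|^{-1}$ --- is an interesting and potentially valid variant, but you must do it \emph{first} and in the right order, and you must justify that $\lim_n\frac1n\log|\det Df^n|$ exists and is finite under only the one-sided integrability hypothesis; as written it is circular.

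The sufficiency direction (GMY $\Rightarrow$ expanding acip) is essentially the paper's argument and is fine.
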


Critical points are not the only way that maps can fail to be local diffeomorphisms. Many interesting and relevant examples, also for applications, arise naturally with discontinuities and/or singularities
(points near which the derivative is unbounded).

\begin{Definition}\label{criticalset}
We say that \( x\) is a \emph{singular point} if  \( Df(x) \)
does not exist, including the case in which \( f \) is discontinuous at \( x \).
 We say that a set of singular
points \( \mathcal C \) is \emph{non-degenerate} if \( \leb(\mathcal
C)=0 \) and, for every \( n\geq 0 \), there are constants $B>1$ and $\beta, \beta' >0$
(possibly depending on \( n \)) such that for every $x\in
 M\setminus\mathcal C_{n} $, conditions~(C2), (C3)  of the previous
 definition are satisfied, and condition~(C1) is replaced by
\begin{enumerate}
\item[(C1')]
\quad $ \displaystyle B^{-1}\dist(x,\mathcal C_{n} )^{-\beta'}\leq
\|Df^{n}(x)\| \leq B\dist(x,\mathcal C_{n} )^{-\beta}$.
\end{enumerate}
 \end{Definition}

For maps which have a non-degenerate singular set we get an almost complete characterization, the only gap occurring due to the fact that a GMY structure does not necessarily imply the following  integrability condition. 

\begin{Definition}
We say that \( \mu \) is \emph{regularly expanding} if it is expanding and in addition we have
\begin{equation}\label{regexp}
 \log\|Df^{-1}\|\in L^{1}(\mu).
 \end{equation}
 \end{Definition}

 Notice that condition \eqref{regexp} implies in particular that the limsup in \eqref{exp} is actually a limit. 
We remark also that the integrability condition \eqref{regexp} is always satisfied in the setting of Theorems~\ref{t.locdif} and \ref{t.crit}. This is immediate in the 
  local diffeomorphism case  since \( \|Df\|  \) and \( \|Df^{-1}\| \) are uniformly bounded above and below, and non-trivial  in the case of \( C^{2} \) maps where it is proved in \cite[Lemma 4.2]{P}, based on \cite[Remark 1.2]{Liu98}.  
In the $C^2$ setting \eqref{regexp} implies also the integrability of \( \log \|Df\| \) since \( \|Df\|  \) is bounded above.

\begin{theorem}\label{t.sing1}
Let \( f: M \to M \) be a \( C^{2} \) map outside a non-degenerate singular set.
   If \( f \) admits a GMY structure then it admits an ergodic expanding acip. Conversely, if \( f \) admits an ergodic regularly expanding acip then it admits a GMY structure.
\end{theorem}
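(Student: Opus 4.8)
The plan is to establish the two implications in Theorem~\ref{t.sing1} along the lines one would expect. For the forward implication (GMY structure $\Rightarrow$ ergodic expanding acip), I would run the standard Young-tower machinery: given a GMY induced map $F: \Delta \to \Delta$ with integrable return time $R$, conditions (i) and (ii) give an expanding Gibbs--Markov map with bounded distortion, which admits a unique absolutely continuous invariant (and ergodic) probability $\mu_F$ on $\Delta$ whose density is bounded above and below. One then saturates: $\mu := \sum_{n\geq 0} f^n_*(\mu_F|_{\{R > n\}})$ is $f$-invariant, and it is a \emph{probability} measure precisely because $R \in L^1(\leb) \subseteq L^1(\mu_F)$ (using that the density of $\mu_F$ is bounded). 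Absolute continuity of $\mu$ with respect to $\leb$ follows from the fact that each $f^n$ is nonsingular on the relevant pieces away from the singular set (which has zero Lebesgue measure). Ergodicity of $\mu$ is inherited from ergodicity of $\mu_F$, and positivity of the Lyapunov exponents comes from the uniform expansion of $F$ together with Kac-type bookkeeping: $\log\|Df^{-1}\|$ need not be integrable a priori, but one recovers positivity of all exponents from the fact that $\int \log\|DF^{-1}\| \, d\mu_F < 0$ and $R$ is integrable.

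For the converse (ergodic regularly expanding acip $\Rightarrow$ GMY structure), the argument is the geometric heart of the paper and should follow the construction used for the local diffeomorphism and critical cases. Starting from the acip $\mu$, one first uses the hypothesis that $\mu$ is \emph{regularly} expanding — so $\log\|Df^{-1}\| \in L^1(\mu)$ — to obtain, via Birkhoff and Oseledets, a positive-measure set of points with \emph{uniform} backward contraction rates and controlled hyperbolic times, i.e. a set where $\|Df^n(x)^{-1}\| \leq e^{-cn}$ along a subsequence of hyperbolic times $n$ with positive frequency. This is exactly the point where regular expansion is needed and where the ordinary expanding hypothesis would not suffice: without \eqref{regexp} one cannot pass from "$\limsup$ positive" to a genuine frequency-of-hyperbolic-times estimate, which is why the characterization in this case has a gap. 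With hyperbolic times in hand, one constructs, near a density point of the acip, a ball $\Delta$ and, on a full-measure subset of $\Delta$, a first-return (or first-inducing-time) structure: each relevant hyperbolic time $n$ gives a neighborhood on which $f^n$ is a diffeomorphism onto a ball of fixed radius with bounded distortion (conditions (C2), (C3) control the distortion, (C1') controls the expansion near the singular set); one then selects inducing times recursively so that the images cover $\Delta$, obtaining the Markov partition $\mathcal P$ and the properties (i), (ii) of Definition~\ref{ind}.

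The main obstacle — and the step I expect to require the most care — is the \textbf{integrability of the return time}, i.e. $R \in L^1(\leb)$. For an acip this is typically proved by showing the tail estimate $\leb(\{R > n\})$ is summable, which in turn rests on large-deviation estimates for the frequency of hyperbolic times: one needs that the set of points whose $n$-th hyperbolic time has not yet occurred has exponentially (or at least summably) small Lebesgue measure. In the presence of a singular set with unbounded derivative, the orbits that approach $\mathcal C_n$ cause the main technical trouble, since the expansion estimate (C1') degenerates there; one must show that such approaches are sufficiently rare, using the absolute continuity of $\mu$ together with $\leb(\mathcal C) = 0$ and a Borel--Cantelli-type argument controlling $\sum_n \leb(\dist(f^n x, \mathcal C) < \delta_n)$ for a suitable sequence $\delta_n \to 0$. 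Managing this recurrence to the singular set while simultaneously keeping the distortion bounds (C2), (C3) effective is where the bulk of the work lies; the rest of the construction is a now-fairly-standard adaptation of the Young-tower / hyperbolic-times toolbox.
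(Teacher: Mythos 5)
Your forward implication follows the paper's argument and is correct. For the converse, the overall skeleton (pass to non-uniform expansion for a power of $f$, use hyperbolic times near a well-chosen ball $\Delta$, construct the partition recursively) matches the paper, but your proposed route to integrability of $R$ would fail under the hypotheses of the theorem. You propose to obtain a summable tail $\leb\{R>n\}$ from large-deviation estimates on the frequency of hyperbolic times together with a Borel--Cantelli control of recurrence to $\mathcal C$. Such estimates are what earlier constructions in the spirit of \cite{ALP,Gou06} require, but they rest on strictly stronger, quantitative hypotheses. Here one only assumes a regularly expanding acip; the NUE and SR conditions derived from it give a positive asymptotic \emph{frequency} of hyperbolic times but no \emph{rate}, so no tail estimate for $R$ is available. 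The paper instead proves $R\in L^1$ by a soft averaging argument: introducing, for each point, the counts $H^{(n)}$, $S^{(n)}$, $R^{(n)}$ of hyperbolic times, satellite visits, and returns up to time $n$, the construction forces $R^{(n)}+S^{(n)}\geq\kappa H^{(n)}$; positive frequency gives $H^{(n)}/n\geq\theta$; summability of the satellite measures (Proposition~\ref{prop.Sn}) plus Birkhoff for $F$ give $S^{(n)}/R^{(n)}\to\int S\,d\nu<\infty$; and since $n/R^{(n)}\to\int R\,d\nu$ holds even if the limit is infinite, the resulting lower bound $R^{(n)}/n\geq\kappa'>0$ rules out $\int R\,d\nu=+\infty$. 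Designing the partitioning algorithm so that this bookkeeping (the ``satellites'') is even available is the missing idea in your sketch.

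A smaller imprecision: you locate the use of regular expansion in ``passing from limsup to a frequency of hyperbolic times,'' but the actual role of \eqref{regexp} is to feed into the subadditive ergodic theorem to get a genuine limit for $\frac1n\log\|(Df^n)^{-1}\|$ and, crucially, to make Lemma~\ref{lem:neg} work: one shows $\int\log\|(Df^N)^{-1}\|\,d\mu<0$ for all large $N$ by splitting the integral over a set $B_N$ where $\frac1N\log\|(Df^N)^{-1}\|$ is near zero, and showing via the $L^1$ ergodic theorem (Sublemma~\ref{critical}) that the contribution of $B_N$ is $o(N)$. One then needs the ergodic decomposition of $(f^N,\mu)$ (Lemma~\ref{le.ergdec}) before Birkhoff delivers NUE and SR on a suitable $f^N$-invariant set $A$. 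None of this is captured by ``Birkhoff and Oseledets'' alone.
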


Finally, there are also many systems of interest which have a combination of critical points and singularities and possibly discontinuities near which the derivative is bounded away from zero and infinity (notice that this last situation is not included in the definitions of critical and singular points given above). These cases are actually quite subtle and the  interaction between the critical and singular points can give rise to some significant technical issues.

\begin{Definition}
We say that \( f \) admits a non-degenerate \emph{critical/singular} set \( \mathcal C \) if it is a \( C^{2} \) local diffeomorphism outside a set \( \mathcal C \) on which
  \( Df(x) \) is not
invertible or  does not exist (including the case in which \(
f \) is discontinuous at \( x \)) such that \( \leb(\mathcal
C)=0 \) and, for every \( n\geq 0 \), there are constants $B>1$ and $\beta >0$
 such that for every $x\in
 M\setminus\mathcal C_{n} $, conditions~(C2), (C3)  above
 are satisfied, and condition (C1) or (C1') is replaced by
\begin{enumerate}
\item[(C1'')]
\quad $ \displaystyle B^{-1}\dist(x,\mathcal C_{n})^{\beta}\leq
\|Df^{n}(x)^{-1}\|^{-1}\leq \|Df^{n}(x)\| \leq B\dist(x,\mathcal C_{n} )^{-\beta}$.\end{enumerate}
\end{Definition}

\begin{theorem}\label{t.sing}
    Let \( f: M \to M \) be  a \( C^{2} \)
    local diffeomorphism outside
     a non-degenerate critical/singular set \( \mathcal
    C \). If \( f \) admits a GMY structure then it admits an ergodic expanding acip. Conversely,  if \( f \) admits an ergodic regularly expanding acip \( \mu  \) satisfying \( \log d(x, \mathcal C_{n}) \in L^{1}(\mu) \) then it admits a  GMY structure.
    \end{theorem}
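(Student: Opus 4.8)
The proof splits into two implications, the substantive one being the construction of a GMY structure from the measure. \emph{First, GMY structure $\Rightarrow$ ergodic expanding acip.} Given a GMY induced map $F: \Delta\to\Delta$, $F=f^{R}$, with $\int_{\Delta}R\,d\leb<\infty$, each branch $F|_{U}$ is a uniformly expanding diffeomorphism onto all of $\Delta$ with distortion bounded by (ii), so the folklore theorem for expanding Markov maps with bounded distortion gives an $F$-invariant probability $\nu\ll\leb|_{\Delta}$ with density bounded above and below, and the full-branch property makes $\nu$ exact, hence ergodic. Saturating, $\mu_{*}=\sum_{n\ge0}f^{n}_{*}(\nu|_{\{R>n\}})$ is $f$-invariant, is absolutely continuous (each $f^{j}$ is nonsingular off the $\leb$-null set $\mathcal C_{n}$, so $\mu_{*}(\mathcal C_{n})=0$), and is finite since $\mu_{*}(M)=\int_{\Delta}R\,d\nu<\infty$ by Kac's formula; its normalization $\mu$ is an acip, and exactness of $\nu$ passes to the tower and to $\mu$, so $\mu$ is ergodic. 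Positivity of its exponents follows because $DF^{k}(x)=Df^{S_{k}(x)}(x)$ with $S_{k}=R+\dots+R\circ F^{k-1}$: applying the subadditive ergodic theorem to $\log\|DF^{k}(\cdot)^{-1}\|$ under $\nu$, together with $\tfrac1kS_{k}\to\int R\,d\nu$, shows that $\tfrac1n\log\|Df^{n}(x)^{-1}\|^{-1}$ converges $\mu$-a.e.\ to a value $\ge(-\log\kappa)/\!\int R\,d\nu>0$, so by ergodicity all Lyapunov exponents of $\mu$ are positive.

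\emph{Conversely, let $\mu$ be an ergodic regularly expanding acip with $\log\dist(x,\mathcal C_{n})\in L^{1}(\mu)$.} The first task is to extract from the hypotheses the two estimates driving the hyperbolic-times machinery. Ergodicity makes the smallest Lyapunov exponent an a.e.\ constant $c>0$; combining this with Kingman's theorem for the subadditive cocycle $\log\|Df^{n}(\cdot)^{-1}\|$, with Birkhoff applied to $\log\|Df^{-1}\|\in L^{1}(\mu)$, and, for the singular part, with Birkhoff applied to $\log\dist(\cdot,\mathcal C_{N})\in L^{1}(\mu)$, one obtains --- possibly after replacing $f$ by a fixed iterate, which affects neither the hypotheses nor the conclusion since (C1$''$)--(C3) are assumed for all powers --- a non-uniform expansion estimate $\limsup_{n}\tfrac1n\sum_{j=0}^{n-1}\log\|Df(f^{j}x)^{-1}\|<-c<0$ and a controlled bound on the recurrence sums $\sum_{j=0}^{n-1}-\log\dist(f^{j}x,\mathcal C)$, valid $\mu$-a.e.\ and hence, since $\mu\ll\leb$, on a set $H$ of positive Lebesgue measure. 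The $C^{2}$ plus non-degeneracy hypotheses then yield, as in \cite{ABV}, the existence of $\sigma$-hyperbolic times for points of $H$: times $n$ for which $f^{n}$ maps a neighborhood $V_{n}(x)\ni x$ diffeomorphically onto the $\delta$-ball about $f^{n}(x)$, with backward contraction $\|Df^{n-k}(f^{k}y)^{-1}\|\le\sigma^{n-k}$ along the pulled-back balls and with the distortion of $\det Df^{n}$ on $V_{n}(x)$ uniformly bounded --- here (C1$''$) controls the degradation of expansion near $\mathcal C$ and (C2), (C3) give the distortion. The Pliss lemma then upgrades the averaged expansion into a positive lower density $\theta>0$ of hyperbolic times at $\leb$-a.e.\ point of $H$.

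\emph{Construction of the induced map.} Fix $\delta$ small and let $\Delta$ be a small ball about a Lebesgue density point $p$ of the set of points of $H$ that return, at hyperbolic times and with positive frequency, to every neighborhood of $p$. Define $R(x)$ to be the first hyperbolic time $n$ at which $f^{n}$ carries the connected component of $x$ in the not-yet-returned part of $\Delta$ onto a set covering $\Delta$, put $F(x)=f^{R(x)}(x)$, and let $\mathcal P$ be the partition of $\Delta$ into the resulting branch domains; this is produced by the standard inductive algorithm --- at each step remove the sub-pieces that now cover $\Delta$, let the remainder grow to radius $\delta$, and iterate. Then $F|_{U}$ is a diffeomorphism onto $\Delta$ by construction, condition (i) holds with $\kappa=\sigma$, and condition (ii) is precisely the bounded distortion of $\det Df^{n}$ at hyperbolic times supplied by (C2)--(C3) along the controlled pull-backs.

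\emph{The main obstacle.} Everything now reduces to integrability of $R$ with respect to $\leb|_{\Delta}$, i.e.\ $\sum_{n}\leb\{R>n\}<\infty$. I would decompose $\{R>n\}$ according to the mechanism blocking a covering return: too few hyperbolic times so far --- controlled by the positive density $\theta$, which forces an exponential tail, transported from the reference ball to the individual pieces by the bounded distortion --- versus a pull-back piece that has drifted too close to the critical/singular set and is therefore still too small. The second contribution is the genuinely delicate point in the combined critical/singular setting, and it is here alone that the hypotheses $\log\dist(x,\mathcal C_{n})\in L^{1}(\mu)$ and regular expansion enter: via $\mu\ll\leb$ on $H$ and the bounded distortion they bound the Lebesgue measure of the set of points whose orbit lingers within distance $e^{-\eta n}$ of $\mathcal C$ up to time $n$, and summing these bounds gives $\sum_{n}\leb\{R>n\}<\infty$. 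Once $R$ is integrable, $F: \Delta\to\Delta$ is a GMY induced map with integrable return times, completing the converse. Theorems~\ref{t.locdif}, \ref{t.crit} and \ref{t.sing1} appear as the special cases $\mathcal C=\emptyset$, $\mathcal C$ a non-degenerate critical set, and $\mathcal C$ a non-degenerate singular set; in the first two, regular expansion and $\log\dist(x,\mathcal C_{n})\in L^{1}(\mu)$ hold automatically (as noted after the definition of regularly expanding, and trivially when $\mathcal C=\emptyset$), so no extra hypothesis is needed there.
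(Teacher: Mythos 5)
Your first implication (GMY structure $\Rightarrow$ ergodic expanding acip) matches the paper's argument. For the converse, your high-level outline --- pass to an iterate, derive NUE and SR from Kingman and Birkhoff, invoke hyperbolic times and Pliss, then induce --- is the paper's strategy (Proposition~\ref{pr.NUE} reducing to Theorem~\ref{t.NUE}, then Sections~\ref{sub.hyptimes}--\ref{s.markov}), but the two steps that actually carry the weight are either wrong or too vague.

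The most serious gap is the integrability of $R$. You claim that the positive lower density $\theta$ of hyperbolic times ``forces an exponential tail'' for $\leb\{R>n\}$, with the critical/singular contribution handled separately. This is not true in this generality, and it is precisely the point of the paper. Positive density of hyperbolic times at $\leb$-a.e.\ point does not control the tail of the first covering return, because the covering events at different points are not synchronized and the leftover sets $\Delta_n$ do not inherit the density estimate; this is exactly why constructions in the style of \cite{ALP} require additional summability hypotheses to get any rate at all, let alone an exponential one. The paper circumvents this entirely by never estimating the tail: it introduces the satellite sets $S_n$ and proves $\sum_n\leb(S_n)<\infty$ (Proposition~\ref{prop.Sn}), and then in Section~\ref{s.inducing} runs a purely ergodic-theoretic counting argument ($R^{(n)}+S^{(n)}\geq\kappa H^{(n)}$, divide by $n$, apply Birkhoff to the satellite-visit frequency) to conclude $\int R\,d\nu<\infty$ directly, with no tail estimate and no use of $\log\dist(\cdot,\mathcal C_n)\in L^1(\mu)$ at this stage. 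That hypothesis enters earlier, in the proof of SR (Proposition~\ref{pr.NUE}), not in the integrability step as you suggest.

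Two smaller gaps are worth flagging. First, obtaining NUE from regular expansion is not just Kingman plus passing to an iterate: Kingman gives only $\lim\frac1n\log\|Df^n(x)^{-1}\|\leq\int\log\|Df^{-1}\|\,d\mu$, which does not make $\int\log\|(Df^N)^{-1}\|\,d\mu$ negative; the paper needs Lemma~\ref{lem:neg} together with Sublemma~\ref{critical} to control the contribution of the set $B_N$ where $\|Df^N(x)^{-1}\|$ is large, and then Lemma~\ref{le.ergdec} to pick an ergodic component of $f^N$. Second, your partitioning rule ``$R(x)$ is the first hyperbolic time of $x$ at which the connected component of $x$ covers $\Delta$'' is not what the paper does and would not close: the candidate pieces $U^x_{n,m}$ need not contain $x$, and one must run the maximal-disjoint-family algorithm of Section~\ref{se.algo}, tracking the unused hyperbolic pre-balls as satellites, precisely so that the counting inequality $R^{(n)}+S^{(n)}\geq\kappa H^{(n)}$ becomes available. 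Likewise, picking $\Delta$ around a mere density point of $H$ is weaker than the paper's Proposition~\ref{pr.balls}, which produces a ball with $\leb(B\setminus A)=0$, and this full-measure property is used later.
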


Due to the very weak conditions on the critical set, we allow here both critical points and singularities and even allow the same point to be critical in one direction and singular in the other,  we need to assume the integrability of the logarithm of the distance function to the critical set.

\subsection{The liftability problem}
Our results can also be viewed in the context of the so-called ``liftability problem''. It is a classical result that
a  GMY map
\( F \) admits an ergodic  absolutely continuous invariant
probability
 measure (\emph{acip}) \( \nu \)
 with bounded density; see e.g. \cite[Lemma~
 2]{Y1} but the result goes back, at least in its idea, to the 50's and is often considered  a Folklore Theorem.
 It is then  possible to define a measure
 \begin{equation}\label{eq:lift}
{\mu}= \frac{\sum_{j=0}^{\infty}f^j_{*}(\nu|\{  R > j\})}{\sum_{j=0}^{\infty}\nu(\{  R > j\})},
  \end{equation}
  which is seen by standard arguments to be \( f \)-invariant and absolutely continuous.
The integrability condition with respect to the Riemannian volume
and the bounded density of \( \nu \) imply the integrability of \( R
\) with respect to \( \nu \) and thus guarantees that the
denominator is finite. It follows that  \( \mu \) is an  acip for \( f \).

\begin{Definition} If \( F \) is an induced map of \( f \) and \( \nu \) and \( \mu \) are \( F \)-invariant and \( f \)-invariant probability measures respectively, related by the formula \eqref{eq:lift}, then we say that \( \mu \) is the \emph{projection} of \( \nu \) or that
 \( \nu \) is the \emph{lift} of \(
    \mu \) to the induced map.
 \end{Definition}

A natural question is which measures can be obtained in this way, i.e. which measures admit a lift to a GMY induced map.
A few papers have addressed the issue from various points of view, see for example  \cite{BruTod07, Kel89, PesSenZha08, PesZha07, Zwe05} and in particular \cite{Dob} in which a one-dimensional version of some of the results presented here are obtained, and  \cite {Pin}  in which the liftability problem is studied in great generality (not just absolutely continuous measures) under some sets of assumptions different from, but related to, the assumptions of this paper.
One direction of the implications stated in each of the theorems above can be viewed and formulated in this light. Thus, for example we have
\begin{theorem}\label{th:lift}
Let \( f: M \to M  \) be a \( C^{2} \) map with a non-degenerate critical set. Then
every ergodic  expanding acip is liftable.
\end{theorem}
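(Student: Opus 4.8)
The plan is to prove the forward implication of Theorem~\ref{t.crit} in its sharper, liftable form: starting from an ergodic expanding acip $\mu$, produce a GMY induced map $F=f^{R}\colon\Delta\to\Delta$ with integrable return times together with an $F$-invariant probability $\nu$ whose projection \eqref{eq:lift} is $\mu$. The first step is to convert the hypothesis on $\mu$ into the two metric conditions that drive the construction, namely non-uniform expansion and slow recurrence to $\mathcal C$. By ergodicity and the Oseledets theorem the Lyapunov exponents are $\mu$-a.e.\ equal to constants $0<\lambda_{1}\leq\cdots\leq\lambda_{d}$; since $\log\|Df^{-1}\|\in L^{1}(\mu)$ in the $C^{2}$ setting (as recalled in the introduction), Birkhoff's theorem gives $\frac1n\sum_{j=0}^{n-1}\log\|Df(f^{j}x)^{-1}\|\to-\lambda_{1}<0$ for $\mu$-a.e.\ $x$. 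Moreover condition~(C1) bounds $-\log\dist(x,\mathcal C)$ from above by an affine function of $\log\|Df(x)^{-1}\|$, so $\log\dist(\cdot,\mathcal C)\in L^{1}(\mu)$ as well, and dominated convergence then yields the slow recurrence estimate: for every $\varepsilon>0$ there is $\delta=\delta(\varepsilon)>0$, with $\delta\to0$ as $\varepsilon\to0$, such that $\limsup_{n}\frac1n\sum_{j=0}^{n-1}-\log\dist_{\delta}(f^{j}x,\mathcal C)\leq\varepsilon$ for $\mu$-a.e.\ $x$, where $\dist_{\delta}$ is the distance to $\mathcal C$ truncated to be $1$ outside the $\delta$-neighbourhood of $\mathcal C$.

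Let $H$ be the full $\mu$-measure set of points satisfying both conditions. Since $\mu\ll\leb$ we have $\leb(H)>0$. Applying Pliss' lemma to the Birkhoff sums above, as in Alves--Bonatti--Viana \cite{ABV}, one obtains for each $x\in H$ a set of \emph{hyperbolic times} of positive lower density $\theta>0$, and at each hyperbolic time $n$ a \emph{hyperbolic pre-ball} $V_{n}(x)\ni x$ such that $f^{n}$ maps $V_{n}(x)$ diffeomorphically onto a ball $B(f^{n}x,\delta_{1})$ of fixed radius $\delta_{1}$, with uniform backward contraction $\|Df^{n-j}(f^{j}y)^{-1}\|\leq\kappa^{(n-j)/2}$ for $0\leq j<n$ and $y\in V_{n}(x)$, and with uniformly bounded distortion of $\det Df^{n}$ on $V_{n}(x)$. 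The non-degeneracy hypotheses enter precisely in this distortion estimate: one controls the variation of $\log|\det Df|$ and of $\log\|Df^{-1}\|$ step by step using (C2)--(C3), the slow recurrence condition guaranteeing that the intermediate iterates $f^{j}(y)$ stay far enough from $\mathcal C_{n}$ for these local-Lipschitz bounds to be summable along the orbit.

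The combinatorial heart of the argument is then the inductive return-time algorithm of Alves--Luzzatto--Pinheiro \cite{Al}: one fixes a ball $\Delta$ of radius $\delta_{1}/4$ centred at a Lebesgue density point of $H$ and, at each stage, refines the current collection of pieces of $\Delta$ by one iterate of $f$; whenever a piece reaches a hyperbolic time $n$ at which its hyperbolic pre-ball contains a connected component that $f^{n}$ maps diffeomorphically onto $\Delta$, that component is removed and declared an element $U$ of the partition $\mathcal P$ with $R|_{U}=n$. One shows, as usual, that the set of points of $\Delta$ never assigned a return time is $\leb$-null, so that $\mathcal P$ is a ($\leb$ mod $0$) partition of $\Delta$, $R$ is constant on each $U\in\mathcal P$, and $f^{R}|_{U}\colon U\to\Delta$ is an onto diffeomorphism; property~(i) of a GMY map is the hyperbolic-time contraction and property~(ii) follows by combining the bounded distortion at the terminal hyperbolic time with (C2)--(C3) along the remaining iterates. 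Thus $F=f^{R}\colon\Delta\to\Delta$ is a GMY induced map. Once this is known, the Folklore Theorem quoted in the introduction gives an $F$-invariant acip $\nu$ with bounded density, and its projection $\tilde\mu$ defined by \eqref{eq:lift} is an $f$-invariant acip; since $\Delta$ can be taken inside the support of $\mu$ with $\mu(\Delta)>0$ and $\mu$-a.e.\ point of $\Delta$ having a finite return time (again $\mu\ll\leb$), one identifies $\tilde\mu$ with $\mu$ by ergodicity, so that $\nu$ is the desired lift.

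The main obstacle is the integrability of the return times, that is $\sum_{n}\leb\{x\in\Delta:R(x)>n\}<\infty$ (equivalently, finiteness of the denominator in \eqref{eq:lift}). This needs a \emph{quantitative} version of the convergence of the Birkhoff sums controlling expansion and recurrence: a large-deviations type estimate showing that the proportion of points of $\Delta$ whose orbit has not yet attained the frequency of hyperbolic times needed to force a full return by time $n$ decays summably. Here the existence of the acip $\mu$ is exactly what makes those Birkhoff averages converge in the first place, but turning this into a summable tail with respect to $\leb$ on $\Delta$ — and, in particular, controlling the part of $\Delta$ lying outside $H$, since no ball is contained in $H$ — is the delicate point, and I expect it, together with the attendant bookkeeping in running the algorithm $\leb$-almost everywhere on $\Delta$, to be by far the hardest part of the proof.
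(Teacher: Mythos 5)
Your overall plan matches the paper's strategy, and the final identification step (project $\nu$ via \eqref{eq:lift} to an ergodic acip $\tilde\mu$ with $\Delta$ in its support, then conclude $\tilde\mu=\mu$ by uniqueness of ergodic acips sharing a support point) is exactly the paper's proof of Theorem~\ref{th:lift}. But there is a genuine gap at the very first reduction. You claim that Birkhoff's theorem applied to $\log\|Df^{-1}\|$ gives $\tfrac1n\sum_{j=0}^{n-1}\log\|Df(f^{j}x)^{-1}\|\to-\lambda_{1}<0$ a.e. This is false: Birkhoff gives convergence to $\int\log\|Df^{-1}\|\,d\mu$, and this integral is \emph{not} $-\lambda_1$. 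By subadditivity one only has $-\lambda_1=\lim\tfrac1n\log\|Df^n(x)^{-1}\|\le\int\log\|Df^{-1}\|\,d\mu$, and the integral can easily be nonnegative even when all Lyapunov exponents are strictly positive (e.g.\ because of large off-diagonal terms in $Df$). The paper flags exactly this point immediately after \eqref{eq.tag}, and it is the reason why Lemma~\ref{lem:neg} (via Sublemma~\ref{critical}) is needed to produce a large power $N$ with $\int\log\|(Df^{N})^{-1}\|\,d\mu<0$, followed by Lemma~\ref{le.ergdec} to pass to an ergodic component of $(f^N,\mu)$ on which Birkhoff for $f^N$ finally yields NUE. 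Without passing to a power, your NUE condition simply does not follow from the hypotheses, and the downstream hyperbolic-times machinery has no input.

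Separately, you punt entirely on the integrability of $R$ and speculate that it requires large-deviations estimates. It does not: the paper's argument in Subsection~\ref{s.inducing} is a soft counting argument, based on the pointwise inequality $R^{(n)}(x)+S^{(n)}(x)\ge\kappa H^{(n)}(x)$ (returns plus satellite visits dominate hyperbolic times), the positive density of hyperbolic times, and the summability of satellite measures from Proposition~\ref{prop.Sn}, which via Birkhoff forces $R^{(n)}(x)/n$ to be bounded below and hence $\int R\,d\nu<\infty$. Finally, your concern that ``no ball is contained in $H$'' and that one must control the part of $\Delta$ outside the good set is unfounded: Proposition~\ref{pr.balls} produces a ball $B$ with $\leb(B\setminus A)=0$, so the inducing domain $\Delta$ lies, Lebesgue mod $0$, inside the set where NUE and SR hold.
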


To see how  Theorem \ref{th:lift} follows from Theorem \ref{t.crit} (and the corresponding liftability statements in the other settings follow from the corresponding theorems) recall first of all that, under the assumption of the existence of an expanding acip \(  \mu  \), Theorem \ref{t.crit} implies the existence of a GMY induced map. Let \( \nu \) be the absolutely continuous ergodic \( F \)-invariant
measure for the GMY map with
integrable return time function~\( R \). 
Thus 
we just need to
discuss the relationship between the original measure \( \mu \) and
the lift of the measure \( \nu \) to the induced GMY map.
From \cite[Lemma~
 2]{Y1}
it follows that $\nu$ has density with respect to Lebesgue measure
on $\Delta$ bounded from above and below by positive constants. Then
 we easily get that $R$ is also Lebesgue integrable. Keeping in mind that this
return time is defined in terms of \( f^{N} \) we define \( \tilde R
= N R \) and the corresponding \( f \)-invariant probability measure \(
\tilde\mu \) by \eqref{eq:lift}.
It just remains to show that \( \tilde\mu = \mu \). This follows
from the standard fact that  we have that \( \tilde\mu \) and \( \mu
\) are both ergodic absolutely continuous \( f \)-invariant measures
which contain \(
 \Delta \) in their support and therefore they must be equal.

\subsection{GMY structure implies expanding acip}
One direction of the implications mentioned in our results is relatively straightforward, namely the fact that a GMY structures imply the existence of an expanding acip. We have already mentioned in the previous paragraph the classical arguments which show that a GMY structure implies the existence of an acip \( \mu \), and it thus only remains to show that \( \mu \) has all positive Lyapunov exponents. Let \(R_n=R_n(x)\) denotes the number of iterations of \( f \) required for \( x \) to have \( n \) returns under the induced map \( F \). Then we can write
\[ \frac 1n \log \| DF^{n}(x) v \|= \frac 1n \log \|Df^{R_{n}(x)}(x)v\|= \frac{R_{n}}{n} \frac{1}{R_{n}}\log\|Df^{R_{n}}(x) v \|. \]
By the integrability of the return times we have \( R_{n}/n  \) converging to some positive constant, and therefore by the positivity of the Lyapunov exponent for \( F \) it follows that the above equation is positive as \( n\to \infty \) and this implies that \( f \) also has positive Lyapunov exponent.

\subsection{Technical remarks and overview of the paper}
In the previous paragraph we have already discussed one direction of the implications in our main Theorems.
Thus, our full attention in the body of the paper is devoted to the construction of a GMY structure relying only on the assumption of the existence of an expanding acip.
This construction consists  of two main steps: the construction of the GMY induced map and the control of the return times in order to ensure integrability. In both of these steps we achieve, over and above the novelty of the results,  major simplification and greater conceptual clarity in comparison to most existing approaches for similar constructions in other settings.
Indeed, in most papers in which a GMY structure is obtained e.g. \cite{ALP, BruLuzStr03, Dia06, DiaHolLuz06, Gou06, Hol05, Y2}
the construction  is quite involved and technical,
using a mixture  of combinatorial, analytic and probabilistic
arguments. Also, in these papers significantly stronger assumptions are used which imply relatively fast rates of decay  (e.g. exponential or
polynomial) of the inducing time function,
depending on various additional assumptions on the map.
In our case, we are here able to implement what is essentially the most naive strategy in order to achieve our goal, namely to choose some small ball \( \Delta \) and iterate it until some subset of \(\Delta \) covers \( \Delta \) in the right way. This subset then becomes one of the elements of the final partition and we repeat the procedure with the remaining points. A crucial tool used here to ensure that that all regions of \( \Delta \) eventually grow sufficiently large is the notion
of \emph{hyperbolic time}.  This idea which was first applied in the setting of non-uniformly expanding maps  in
\cite{Alv00} and has since then been widely applied in a variety of
settings including the construction of induced GMY maps in
some situations such as those considered in \cite{ALP, Gou06} but not in many other constructions such as   \cite{Y2, BruLuzStr03, Hol05, Dia06,
DiaHolLuz06}.
A major benefit of our approach is that it gives  a particularly
 \emph{efficient algorithm}. This allows us to obtain the integrability of the return times with no particular assumptions.
 Similar arguments and related results have been obtained by Pinheiro in a previous paper \cite{P} and a more general recent preprint  \cite{Pin}.

A far as the organization of the paper is concerned,
notice that the statement of Theorem~\ref{t.sing} concerning sufficient conditions for the existence of a GMY map,
includes Theorems~\ref{t.locdif},~\ref{t.crit} and 3 as special cases (we shall show that the additional integrability  condition \( \log d(x, \mathcal C_n)\in L^{1}(\mu) \) assumed explicitly in Theorem 4 is automatically satisfied in the other cases, see Remark \ref{rem.int}).
We shall therefore concentrate on the proof of
the most general setting as formulated in Theorem~\ref{t.sing}.
In Section \ref{nue} we show that some power of \( f \) satisfies
some stronger expansion condition and also some slow recurrence to
the singular set.  These are the standard conditions which are
usually assumed in the setting of so-called \emph{non-uniformly
expanding maps}. In Section~\ref{sub.hyptimes} we recall some known
properties of non-uniformly expanding maps including the crucial
notion of \emph{hyperbolic time}.  We also prove the important fact
that the support of an invariant measure for a non-uniformly
expanding map contains a ball. This is important in our setting
because, unlike the situation in other papers such as \cite{ALP,
Gou06, P}, we are not assuming that the map is non-uniformly
expanding on the whole manifold.
In Section \ref{s.markov} we give the complete construction of the
induced GMY map. 

We mention here some key differences
between our construction and that of \cite{ALP, Gou06, P}. One of
the shortcomings of \cite{ALP} was a relatively inefficient
construction which led to significantly larger inducing times than
necessary, thus allowing only polynomial estimates to be obtained.
This aspect of the construction was improved in \cite{Gou06, P}
where a global partition of the manifold was introduced, leading to
significantly more efficient construction where the inducing times
are essentially optimal. This strategy cannot be used here since
our assumptions do not necessarily imply the map to be non-uniformly
expanding on the whole attractor. We therefore return to a more local
construction but develop a new strategy to improve the effectiveness
of the inducing time estimates. Finally, in Subsection
\ref{s.inducing} we prove the integrability of the inducing times
for the constructed GMY map.

\section{Non-uniform expansion and slow recurrence}
\label{nue}

In this section we prove that there exists some subset \( A \subseteq M \) on which some power of \( f \) satisfies some quite strong expansivity and recurrence conditions.

\cd Let $f:M\to M$ be a $C^2$ local diffeomorphism outside a
non-degenerate critical/singular set $\mathcal{C}$. We say that $f$ is
\emph{non-uniformly expanding} (\emph{NUE}) on a set $A \subset M$
if there is $\lambda>0$ such that for  every  $x\in A$ one has
\begin{equation*} \liminf_{n\to+\infty}
\frac{1}{n}
    \sum_{j=1}^{n} \log \|Df(f^j(x))^{-1}\|<-\lambda.
\end{equation*}
We say that $f$ has \emph{slow recurrence} (\emph{SR}) if given any
$\epsilon>0$ there exists $\delta>0$ such that for every  $x \in A$
we have
\begin{equation*}
\limsup_{n\to+\infty} \frac{1}{n} \sum_{j=1}^{n}-
\log\dist_\delta(f^j(x),\mathcal{C})\leq\epsilon,
\end{equation*}
where
\( \dist_\delta(x,\mathcal{C})= 1
\) if \( \dist(x,\mathcal{C})\geq\delta \) and
\( \dist(x,\mathcal{C}) \)  otherwise.

\fd

The main result of this section is that there exists a set \( A \) on which
some power of \( f \) satisfies the two conditions (NUE) and (SR).

\cpr\label{pr.NUE} Let \( \mu \)
    be an ergodic regularly expanding acip.
Then, for all $N$  large enough, \( f^N \) satisfies NUE and SR on  a forward
$f^N$-invariant set \( A \) with  a positive Lebesgue measure subset of points whose  $f^N$-orbit is dense in $A$.
\fpr

Proposition \ref{pr.NUE} allows us to reduce the proof of our main theorems to the proof of the following

 \begin{theorem}
 \label{t.NUE} Let \( f: M \to M \) be a \(
C^{2} \)
   local diffeomorphism outside
    a non-degenerate critical/singular set \( \mathcal
   C \). Assume that \( f \) satisfies NUE and SR on a forward
invariant set \( A \) with a positive Lebesgue measure subset of points whose orbit is dense in~$A$.
  Then \( f \) admits a GMY structure.
\end{theorem}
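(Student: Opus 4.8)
The plan is to construct the GMY induced map for $f$ by following the ``naive'' strategy announced in the introduction: fix a small ball $\Delta$ inside the set $A$ (which is possible because, as asserted in Section~\ref{sub.hyptimes}, the support of the relevant measure contains a ball, and more relevantly one can find a ball well inside $A$ using that $A$ carries a dense orbit of positive Lebesgue measure), and then iterate $\Delta$ under $f$, waiting for full-measure-in-the-limit portions of $\Delta$ to spread across $\Delta$ with good geometry. The key tool making this work is the notion of \emph{hyperbolic time} from Section~\ref{sub.hyptimes}: at a hyperbolic time $n$ for a point $x$, the ball around $x$ of some definite radius is mapped diffeomorphically by $f^{n}$ onto a ball of definite (large) radius, with uniformly bounded volume distortion, and with uniform backward contraction along the orbit. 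The first step, then, is to invoke the standard consequence of (NUE)$+$(SR) (a Pliss-type argument) that Lebesgue-a.e.\ point of $A$ has positive frequency of hyperbolic times; this gives us, at each stage, a definite proportion of points whose neighbourhoods have grown to macroscopic size.

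The second and central step is the inductive construction of the partition $\mathcal P$ of $\Delta$ and the inducing time $R$. At stage one, among the points of $\Delta$ having a hyperbolic time, select those whose hyperbolically-expanded image ball actually contains $\Delta$; by the bounded-distortion and uniform-contraction properties of hyperbolic times, the preimage of $\Delta$ inside such an expanded ball is a set $U_{1}$ on which $F=f^{R}$ (with $R$ the corresponding hyperbolic time, suitably adjusted to be constant on $U_{1}$) is a uniformly expanding diffeomorphism onto $\Delta$ with distortion bound (C2)--(C3)-type estimates yielding property ii) of Definition~\ref{ind}; property i) is immediate from the backward contraction at hyperbolic times. Then discard $U_{1}$, and repeat the argument with the remaining set $\Delta\setminus U_{1}$ — iterating further, waiting for new hyperbolic times of the leftover points, each time carving off a new element $U_{j}$ mapped onto $\Delta$. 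One must be careful about two things here: the elements $U_{j}$ produced at different stages must be pairwise disjoint (which is built into the ``discard and continue'' scheme if one always works with the not-yet-assigned set), and one must show that $\bigcup_{j}U_{j}$ covers $\Delta$ up to a Lebesgue-null set. This last point is where the positive frequency of hyperbolic times and a Borel--Cantelli / non-accumulation argument enter: the measure of the leftover set decays, essentially because at every sufficiently long time step a definite proportion of the current leftover is removed.

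The third step is to verify the required integrability, $R\in L^{1}(\leb)$, which by the construction reduces to a tail estimate $\leb\{R>n\}$ summable in $n$. This is exactly the point where, in the introduction, the authors emphasise the efficiency of the algorithm: because the elements are carved off at hyperbolic times, and hyperbolic times occur with positive frequency (quantitatively, by the Pliss lemma applied to the a.e.-defined Birkhoff averages controlled by (NUE) and the integrability coming from $\mu$ being regularly expanding), one gets that the proportion of $\Delta$ still unassigned after $n$ steps decays fast enough to be summable — here the integrability hypotheses on $\log\|Df^{-1}\|$ (and, in the critical/singular case, on $\log\dist(x,\mathcal C_{n})$) feeding into (SR) are what prevent the tail from being too fat. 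I expect the main obstacle to be precisely this integrability estimate in the generality required: controlling the measure of the ``bad'' set of points which at time $n$ either have not yet had a hyperbolic time, or have had one but whose expanded ball never managed to cover $\Delta$, and doing so with \emph{no} a priori rate assumption on the recurrence — this is the step where most previous constructions needed stronger hypotheses, and handling it via the local construction (rather than the global-partition trick of \cite{Gou06, P}, unavailable here since $f$ need not be NUE on all of $M$) is the genuinely new technical contribution.
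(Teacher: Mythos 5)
Your overall plan — choose a small ball, carve off pieces at hyperbolic times, show the leftover set is null, then prove integrability — is the right shape, but two of your three steps contain genuine gaps that the paper's proof is specifically designed to handle.

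First, the central inductive step as you describe it is unworkable. You propose to ``select those [points] whose hyperbolically-expanded image ball actually contains $\Delta$''. But the hyperbolic ball at time $n$ is $B(f^{n}(x),\delta_{1})$, which is centred at $f^{n}(x)$ and can land anywhere in $A$; there is no reason for it ever to cover $\Delta$, and even if it does for some points, there is no control on the waiting time. The paper resolves this by \emph{choosing $\Delta$ carefully}: Proposition~\ref{pr.balls} shows one can take $\Delta$ centred at a point $p$ whose preimages up to some time $N_{0}$ are $\delta_{1}/4$-dense in $A$ and avoid $\mathcal C$; Corollary~\ref{andadinha} then upgrades this to: \emph{every} ball of radius $\delta_{1}$ (essentially contained in $A$) contains a preimage of $\Delta'$ that maps diffeomorphically onto $\Delta'$ in at most $N_{0}$ extra iterations, with uniform distortion. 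Thus \emph{every} hyperbolic time produces a candidate return piece within a uniformly bounded extra delay, and the inducing time is $R=n+m$ with $m\le N_{0}$. Without this transitivity ingredient the construction does not start. You also compress the genuine difficulty of disjointness: candidate pieces $U^{x}_{n,m}$ attached to different base points overlap in uncontrolled ways (and may not contain $x$ itself), and the paper's ``satellite'' bookkeeping (Subsection~\ref{se.algo}, Lemma~\ref{estpreball}, Lemma~\ref{estimativas}, Proposition~\ref{prop.Sn}) exists precisely to quantify how much mass is wasted when one candidate is chosen over a conflicting one.

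Second, your integrability step asserts a summable tail $\leb\{R>n\}$. This is exactly what the paper \emph{avoids} proving — a summable (let alone quantitative) tail bound would require the kind of rate assumptions on recurrence that the theorem does not make, and the authors stress that the novelty is obtaining integrability ``with no particular assumptions''. The actual argument (Subsection~\ref{s.inducing}) is of a different nature: one already has the ACIP $\nu$ for $F$ with density bounded away from $0$ and $\infty$, and one compares, along typical orbits, the number $R^{(n)}$ of returns, the number $S^{(n)}$ of satellite visits and the number $H^{(n)}$ of hyperbolic times before time $n$. The construction gives $R^{(n)}+S^{(n)}\ge \kappa H^{(n)}$; positive frequency of hyperbolic times gives $H^{(n)}/n\ge\theta$; and $\int S\,d\nu<\infty$ (which comes from $\sum\leb(S_{n})<\infty$, Proposition~\ref{prop.Sn}) keeps $S^{(n)}/R^{(n)}$ bounded. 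Together these force $R^{(n)}/n\ge\kappa'>0$, and then $n/R^{(n)}\to\int R\,d\nu$ by Birkhoff forces $\int R\,d\nu<\infty$ by contradiction. This soft ergodic-theoretic closing of the loop is the key idea replacing the tail estimate you propose, and you would need to supply it.
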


Theorem~\ref{t.NUE} and Proposition~\ref{pr.NUE}
 imply a GMY structure \( F: \Delta \to \Delta \)
for \( f^{N} \) with return time function \( R \). Clearly this immediately implies also a GMY structure \( \tilde F : \Delta \to \Delta \) for \( f \) by simply taking a new return time function \( \tilde R = NR \). The integrability of \( R \) implies the integrability of \( \tilde R \) and this therefore implies Theorem~\ref{t.sing}.

We shall prove Theorem \ref{t.NUE}
in Sections \ref{sub.hyptimes} and  \ref{s.markov}.
In the remaining part of this section
we prove Proposition \ref{pr.NUE}. We first prove two auxiliary lemmas which are themselves of independent interest.

\begin{Lemma}\label{lem:neg}
 Let \( \mu \)
    be an ergodic regularly expanding acip.
For all sufficiently large \( N \) 
\begin{equation}\label{neg}
\int \log \|(Df^{N})^{-1}\| d\mu < 0.
\end{equation}
\end{Lemma}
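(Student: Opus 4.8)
\textbf{Proof plan for Lemma \ref{lem:neg}.}

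The plan is to exploit the fact that $\mu$ is regularly expanding, so that $\log\|Df^{-1}\|\in L^1(\mu)$ and, by Birkhoff's ergodic theorem applied to the ergodic measure $\mu$, the Lyapunov exponent in the most contracted direction is a well-defined \emph{limit}
\[
 \lambda_{\min}(x):=\lim_{n\to\infty}\frac1n\log\|Df^n(x)^{-1}\|^{-1}
\]
for $\mu$-a.e.\ $x$, and it is strictly positive since $\mu$ is expanding. The natural first step is therefore to relate the integral $\int\log\|(Df^N)^{-1}\|\,d\mu$ to this exponent. Note that in general $\log\|(Df^N)^{-1}\|\neq\sum_{j=0}^{N-1}\log\|Df^{-1}\|\circ f^j$ because the operator norm is not multiplicative, so one cannot simply invoke invariance of $\mu$ term by term; but one always has the submultiplicative inequality $\|(Df^N)^{-1}\|\le\prod_{j=0}^{N-1}\|Df^{-1}(f^j(x))\|$, hence
\[
 \frac1N\int\log\|(Df^N)^{-1}\|\,d\mu\le\frac1N\sum_{j=0}^{N-1}\int\log\|Df^{-1}\|\circ f^j\,d\mu=\int\log\|Df^{-1}\|\,d\mu,
\]
using $f$-invariance of $\mu$. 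This reduces the problem to showing that $\int\log\|Df^{-1}\|\,d\mu<0$ for $N=1$ — but this need not be true, since $\|Df^{-1}\|$ measures the \emph{least} expansion and a single step may fail to expand even when all Lyapunov exponents are positive. So the submultiplicative bound alone is not enough; it only shows the left side is bounded above by something that may be nonnegative.

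The correct route is the subadditive ergodic theorem. Set $\varphi_N(x)=\log\|(Df^N)^{-1}(x)\|$. By the chain rule and submultiplicativity of the operator norm, $\|(Df^{m+n})^{-1}(x)\|\le\|(Df^n)^{-1}(f^m(x))\|\cdot\|(Df^m)^{-1}(x)\|$, so $\varphi_{m+n}(x)\le\varphi_n(f^m(x))+\varphi_m(x)$, i.e.\ $(\varphi_N)$ is a subadditive cocycle. Moreover $\varphi_1=\log\|Df^{-1}\|\in L^1(\mu)$ and each $\varphi_N$ is bounded above by $N\|\varphi_1\|_{L^1}$-type quantities so $\varphi_1^+\in L^1$; by Kingman's subadditive ergodic theorem, $\frac1N\varphi_N\to L$ in $L^1(\mu)$ and $\mu$-a.e., where $L=\inf_N\frac1N\int\varphi_N\,d\mu=\lim_N\frac1N\int\varphi_N\,d\mu$. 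By ergodicity the limit $L$ is constant, and the pointwise limit $\lim_N\frac1N\log\|(Df^N)^{-1}(x)\|$ equals $-\lambda_{\min}(x)\le-\lambda_1(x)<0$, where $\lambda_1(x)>0$ is the smallest Lyapunov exponent, which is positive for $\mu$-a.e.\ $x$ by the expanding hypothesis \eqref{exp} (here one needs that the limsup is a limit, which is guaranteed by \eqref{regexp}, as remarked after that equation). Hence $L<0$.

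Finally, since $\frac1N\int\varphi_N\,d\mu\to L<0$, there exists $N_0$ such that for all $N\ge N_0$ we have $\frac1N\int\varphi_N\,d\mu<0$, and therefore $\int\log\|(Df^N)^{-1}\|\,d\mu=\int\varphi_N\,d\mu<0$, which is exactly \eqref{neg}. The main obstacle, and the point that requires care, is the verification of the integrability hypotheses needed to apply Kingman's theorem to the subadditive sequence $(\varphi_N)$ — in particular that $\varphi_1^+\in L^1(\mu)$, which in the singular setting of Theorem \ref{t.sing} follows from condition (C1'') together with the assumed integrability $\log\dist(x,\mathcal C)\in L^1(\mu)$ — and the identification of the subadditive limit with $-\lambda_{\min}$, which is a standard consequence of Oseledets' theorem but should be stated cleanly. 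Everything else is routine.
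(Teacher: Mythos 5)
Your proposal is correct, and it takes a genuinely shorter route than the paper's proof. Both arguments begin the same way: use the subadditive/Oseledets theory (with integrability supplied by \eqref{regexp}) to obtain that $\frac1n\log\|Df^n(x)^{-1}\|\to-\lambda$ for $\mu$-a.e.\ $x$, where $\lambda>0$ is the smallest Lyapunov exponent. From there you invoke the ``convergence of means'' clause of Kingman's theorem, $\lim_N\frac1N\int\varphi_N\,d\mu=\inf_N\frac1N\int\varphi_N\,d\mu=\int\bigl(\lim_N\frac1N\varphi_N\bigr)\,d\mu=-\lambda<0$, and you are done. The paper instead gives a hands-on argument that re-proves this convergence of means in the case at hand: it introduces the bad sets $B_N=\{x:\log\|Df^N(x)^{-1}\|>-\lambda N/2\}$, notes $\mu(B_N)\to0$ from the a.e.\ convergence, and splits $\int\varphi_N\,d\mu$ into the contribution from $M\setminus B_N$ (which is $\le-\lambda N/3$ for large $N$) and the contribution from $B_N$, which is controlled by the chain-rule bound $\varphi_N\le\sum_{j=0}^{N-1}\varphi_1\circ f^j$ together with a separate Sublemma (a uniform-integrability statement derived from the $L^1$ Ergodic Theorem) showing that $\frac1N\sum_{j<N}\int_{B_N}\varphi_1\circ f^j\,d\mu\to0$. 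So the paper trades one line of Kingman for an explicit estimate plus a Sublemma; your version is more economical but relies on a stronger clause of the subadditive ergodic theorem than the paper chooses to cite. Two small points of precision: the $L^1$-convergence clause of Kingman is only guaranteed once you know $L>-\infty$, so it is cleaner to state the argument using only the equality $L=\lim_N\frac1N\int\varphi_N\,d\mu$ (which holds in $[-\infty,\infty)$ unconditionally) and then observe $L=-\lambda$ is finite and negative; and your $-\lambda_{\min}(x)\le-\lambda_1(x)$ is in fact an equality by Oseledets, not merely an inequality, though nothing breaks either way.
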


\begin{proof}
By the integrability condition \eqref{regexp}, the
 %subadditivity of
%\( \log\|(Df)^{-1}\|  \) (i.e.  \( \log\|(Df^{n+m}(x))^{-1}\|= \log\|(Df^{n}(x)\cdot Df^{m}(f^{n}(x)))^{-1}\|
%\leq \log\|(Df^{n}(x))^{-1}\|+ \log \|Df^{m}(f^{n}(x))^{-1}\| \)) and  the
subadditive ergodic theorem, and condition  \eqref{exp} on the positivity of all Lyapunov exponents, 
there exists  \( \lambda>0\) such that for \( \mu \) almost every \( x \) we have 
 \begin{equation}\label{eq.tag} %\tag*{\( (\star\star) \)}
  \lim_{n\to\infty} \frac 1n\log \|Df^{n}(x)^{-1}\|=-\lambda.
  \end{equation}
  In fact this \(  \lambda  \) may be chosen precisely as the smallest Lyapunov exponent, see e.g. \cite[Addendum 4]{Boc}. We remark that since we are applying here the subadditive ergodic theorem we only have the inequality 
   \( \lim_{n\to\infty} \frac 1n\log \|Df^{n}(x)^{-1}\|\leq\int \log \|Df^{-1}\|d\mu \) and therefore this does not necessarily imply \( \int \log \|Df^{-1}\|d\mu <0 \). That is why we need to take some higher iterate~of~\( f \).

We define the sequence of sets
\[
B_{N}=\{x: \log \|Df^{N}(x)^{-1}\|>{-\lambda N}/{2}\}
 \]
 and write
 \begin{equation}\label{eq:splitint}
  \int \log \|(Df^{N})^{-1}\| d\mu =
 \int_{M\setminus B_{N}} \log \|(Df^{N})^{-1}\| d\mu + \int_{B_{N	}} \log \|(Df^{N})^{-1}\| d\mu
 \end{equation}
%Thus, there exists a sequence of sets $(B_N)_N$ such that
%$\mu(B_N)\rightarrow 0$ when $N\rightarrow \infty$ and
%%$\log\|Df^N(x)v\|\geq\lambda N$ for  every $x\in M\setminus B_N$ and
%%every \( v\in T_{x}M \setminus\{0\}\). This is equivalent to saying  that
%$
%\log\|Df^N(x)^{-1}\|\leq -\lambda N/2 $ for every $x\in M\setminus
%B_N$. Therefore
%\begin{eqnarray*}
%\Leftrightarrow\\
%\|Df^k(x)v\|\geq e^{\lambda k},\quad \forall v\in T_x M
%\Leftrightarrow\\
%\|Df^k(x)^{-1}\|\leq e^{-\lambda k}\quad
%\Leftrightarrow\\
%\log\|Df^k(x)^{-1}\|\leq -\lambda k.\\
%\end{eqnarray*}
%Since
%\begin{eqnarray*}
%\int_{M\setminus B_k}(-\lambda k)d\mu &=& -\lambda k\mu(M\setminus B_k)\\
%&=&-\lambda k(\mu(M)-\mu(B_k))\\
%&=& -\lambda k(1-\mu(B_k)),
%\end{eqnarray*}
From \eqref{eq.tag} %(\( \star\star \))
and the definition of \( B_{N} \) we have that \( \mu(B_{N})\to 0 \) as \( N\to \infty \)
and so for sufficiently large \( N \), assuming without loss of generality that \( \mu(M)=1 \),
we have
\begin{equation}\label{limited0}
    \int_{M\setminus B_N}\log\|Df^N(x)^{-1}\|d\mu \le -\frac\lambda2
N(1-\mu(B_N)) \leq -\frac \lambda 3 N.
\end{equation}
It is therefore sufficient to prove that the second integral on the right hand side of
\eqref{eq:splitint} is not too large. This is intuitively obvious since the measure of the \( B_{N} \) is going to zero, but we must make sure that this is not compensated by the fact that the integrand is possibly increasing in \( n \). We shall use the following

\begin{Sublemma}\label{critical} Let $\varphi \in L^1(\mu)$ and let $ (B_n)_n$ be a
sequence of sets with $\mu(B_n)\rightarrow 0$ as $n\to\infty$. Then
$$\frac{1}{n}\sum_{j=0}^{n-1}\int_{B_n}\varphi\circ f^j \,d\mu \to 0, \text{ as $n \to\infty$}.$$
\end{Sublemma}

\dem
From the $L^1$ Ergodic Theorem (see e.g.
\cite[Corollary~1.14.1]{W}) we have
 \begin{equation}\label{eq.no1}
  \frac{1}{n}\sum_{j=0}^{n-1}\varphi\circ f^j \stackrel{L^1}{\longrightarrow}\varphi^*, \quad\text{as $n\to\infty$}.
   \end{equation}
 Then we can write
\begin{eqnarray*}
% \nonumber to remove numbering (before each equation)
  \left|\frac{1}{n}\sum_{j=0}^{n-1}\int_{B_n}\varphi\circ f^j \,d\mu - \int_{B_n}\varphi^* d\mu \right| &=& \left|\int\left(\frac{1}{n}\sum_{j=0}^{n-1}\varphi\circ f^j \,d\mu - \varphi^*\right)\chi_{B_n} d\mu \right| \\
    &\le& \int\left|\frac{1}{n}\sum_{j=0}^{n-1}\varphi\circ f^j \,d\mu - \varphi^*\right|d\mu.
\end{eqnarray*}
It follows from \eqref{eq.no1} that this last quantity converges to
0 when $n\to\infty$. Since we also have
$\int_{B_n}\varphi^* d\mu \rightarrow 0,\quad\text{when $n\rightarrow\infty$,}$ the conclusion then holds. \cqd

Returning to the proof of the Lemma,
by the chain rule we have
\begin{equation}\label{limited}
\int_{B_N}\log\|Df^N(x)^{-1}\|d\mu\leq
\sum_{j=0}^{N-1}\int_{B_N}\log\|Df(f^j(x))^{-1}\|d\mu
=: N b_N. %\int_{B_k}\log\|Df^k(x)^{-1}\|d\mu.
\end{equation}
Applying Sublemma~\ref{critical} with \( \varphi = \log \|(Df)^{-1}\|
\) we get that  $b_N\rightarrow 0$ when $N\to\infty$.
 %if the critical set $\mathcal{S}=\emptyset$
%
%From lemma \ref{critical} we have
%$\int_{B_k}\log\|Df^k(x)^{-1}\|d\mu=b_k\rightarrow 0$ when
%$k\rightarrow\infty$.
%
Therefore, substituting  \eqref{limited0} and \eqref{limited} into \eqref{eq:splitint}
we obtain the desired conclusion.
\end{proof}

Next we identify possible candidates for the set A in the Proposition.
The existence of \( A \)
 is based on the following result on the
existence of finitely many ergodic components for powers of~\( f \).

\cle\label{le.ergdec} Let \(  \mu  \) be an ergodic invariant probability measure for \(  f  \). Given $N\ge 1$, there are $1\le \ell\le N$ and
$f^N$-invariant Borel sets $C_1,\dots,C_\ell$ such that:
  \begin{enumerate}
  \item $\{C_1,\dots,C_\ell\}$ is a partition ($\mu$-mod 0) of $M$ with $\mu(C_j)\ge 1/N$ for each $1\le j\le \ell$;
    \item $(f^N,\mu\vert C_j)$ is ergodic for each $1\le j\le \ell$.
  \end{enumerate}
%$A_k$ such that we can take $f^k:A_k \to M$ and $\mu_k = \mu|A_k$
%with the system $(f^k, \mu_k)$ is ergodic.
\fle

\dem We start by proving that if $C$ is an $f^N$-invariant subset
with positive measure, then $\mu(C)\geq 1/N$. Indeed, assume by
contradiction that $\mu(C)< 1/N$. Consider the $f$- invariant set
$$\bigcup_{j=0}^{N-1}f^{-j}(C).$$
We have that
$$0<\mu\left(\bigcup_{j=0}^{N-1}f^{-j}(C)\right)\leq\sum_{j=0}^{N-1}\mu(f^{-j}(C))<1.$$
This gives a contradiction, because the set is $f$- invariant and
$\mu$ is ergodic.

Now, if $(f^N,\mu)$ is not ergodic, then we may  decompose $M$ into
a union of two $f^N$-invariant disjoint sets  with positive measure.
If the restriction of $\mu$ to some of these sets is not ergodic,
then we iterate this process. Note that this must stop after a
finite number of steps with at most $N$ disjoint subsets, since
$f^N$-invariant sets with positive measure have its measure bounded
from below by $1/N$. \cqd

For a given $N\ge 1$ we shall refer to the sets $A_i=\supp(\mu\vert C_i)$, with $1\le i\le\ell$ and  $A_1,\dots ,A_\ell$
given by the previous lemma, as the \emph{ergodic components} of
$(f^N, \mu)$. Observe that if $\mu$ is ergodic with respect to $f^N$,
then it has exactly one ergodic component.
We are now ready to complete the proof of the Proposition.

\dem[Proof of Proposition \ref{pr.NUE}]
Choosing \( N \) sufficiently large, Lemmas \ref{lem:neg} and
\ref{le.ergdec} imply that there  is some ergodic
component  \(A_{i} \) of \( (f^{N}, \mu) \) such that
\(
    \int_{A_i}\log \|(Df^{N})^{-1}\|d\mu < 0.
\)
Thus, by Birkhoff's Ergodic Theorem for $\mu$ almost every $x\in A_i$
one has
\begin{equation*}\label{eq.integral}
\lim_{n\rightarrow\infty}\frac{1}{n}\sum_{j=0}^{n-1}
\log\|Df^N(f^{Nj}(x))^{-1}\| =\int_{A_i}\log\|(Df^N)^{-1}\|d\mu <0.
 \end{equation*}
This proves NUE for \( f^{N}  \) for $\mu$ almost every point in the set \( A_i\).

Let us now prove the slow recurrence  condition SR for \( f^{N} \)
in the same ergodic component~$A_i$.  By assumption we have
$\log\dist(\cdot,\mathcal{C}_N) \in L^1(\mu)$. Therefore, by the
monotone convergence theorem we have
\( \int_{A_i}-\log\dist_\delta(\cdot ,
\mathcal{C}_N)d\mu\longrightarrow0, \)
when
$\delta\rightarrow0$.
So, by Birkhoff's Ergodic Theorem, given any $\epsilon>0$ there
exists $\delta>0$ such that
\begin{equation*}
\lim_{n\rightarrow\infty}\frac{1}{n}\sum_{j=0}^{n-1}-\log\dist_\delta(
f^{Nj}, \mathcal{C}_N) = \int_{A_i}-\log\dist_\delta(\cdot ,
\mathcal{C}_N)d\mu\leq\epsilon
\end{equation*}
for $\mu$ almost every $x \in A_i$ which is exactly conditions (SR).

\begin{Remark}\label{rem.int}
The integrability condition $\log\dist(\cdot,\mathcal{C}_N) \in L^1(\mu)$ is assumed explicitly in the statement of Theorem \ref{t.sing}, but not in the settings of Theorems \ref{t.locdif}, \ref{t.crit} and \ref{t.sing1}, where it actually follows from the other assumptions, in particular the nondegeneracy conditions on the critical and singular sets together with the integrability condition \( \log \|Df^{-1}\|\in L^{1}(\mu) \) stated in \eqref{regexp}.
Indeed, $\log\dist(\cdot,\mathcal{C}_N) \in L^1(\mu)$  is trivially satisfied in the setting of local diffeomorphisms. In the presence of critical points, condition (C1) implies \( \|Df(x)^{-1}\|^{-1} \leq d(x, \mathcal{C}_N)^{\beta'} \) which implies \( \|Df(x)^{-1}\|\geq d(x, \mathcal{C}_N)^{-\beta'} \) and thus \( \int \log \|Df^{-1}\|d\mu \geq -\beta '\int \log d(x, \mathcal{C}_N)d\mu  \). Then from condition \eqref{regexp} we get  \( -\int \log d(x, \mathcal{C}_N)d\mu < + \infty \) which gives
\( \int \log d(x, \mathcal{C}_N)d\mu > - \infty \) which implies $\log\dist(\cdot,\mathcal{C}_N) \in L^1(\mu)$. By a completely analogous argument, in the presence of singular points condition (C1') implies \( \|Df\|\geq d(x, \mathcal{C}_N)^{-\beta'} \) which implies
\( \log \|Df(x)\| \geq -\beta' \log d(x, \mathcal{C}_N) \)
 and   thus \( \log d(x, \mathcal{C}_N)\in L^1(\mu) \)  once again.
\end{Remark}
Concerning the set o points with dense orbits in \(  A_i  \), we know that almost all  orbits in the support of an ergodic measure have dense orbit in the support of the measure, see e. g. \cite[Proposition 4.1.18]{HK}, and therefore have full \(  \mu  \) measure in \(  A_i  \).

 To complete the proof we define \(  A'  \) as the set of points in \(  M  \) for which NUE  and SR hold and let \(  A:= A'\cap A_i  \). Then since both \(  A'  \) and \(  A_i  \) are \(  f^N  \) invariant, also \(  A  \) is \(  f^N  \) invariant. Moreover,  \(  \mu(A)=  \mu(A_i) > 0  \) and so, by absolute continuity, the Lebesgue measure of \(  A  \) is positive. Thus the set \(  A  \) satisfies the required properties.   This completes the proof of the Proposition.
\cqd

%
% $\mathbf{Strategy }$
% \begin{itemize}
% \item Consider the system $(f^N, \mu_N)$.
% \item Construct a partition of subset $\Delta$ contained in some $A_l=A$.
% \item Consider the map $F = (f^N)^R$ and the measure $\nu$.
% \item Saturate the measure $\nu$
% \item Define a $f$-invariant measure $\hat{\mu}$.
% \item Verify that the original measure and the obtained measure are
% equals.
% \end{itemize}
%
%
% In the next section we introduce the definition and properties of
% hyperbolic times which are key tools for construction of a local
% Markov structure.

 \section{Choice of inducing domain}\label{sub.hyptimes}

We now begin the proof of Theorem~\ref{t.NUE}.  Assume that \( f \)
satisfies NUE and SR on a forward invariant set \( A \) with
positive Lebesgue measure subset of points whose orbit is dense in
$A$. This section is devoted to the proof of the  following result and its corollary, which will be used for the choice of our domain \( \Delta \) of
definition for the induced map \( F \).

\cpr \label{pr.balls}\label{l.dense}
For sufficiently small \( \delta_{1}>0 \), there is a ball $B$ of radius $\delta_1/4$
such that   $\leb(B\setminus A)=0$.
 Moreover, there are $p \in B$ and $N_0
\in\NN$ such that  $\bigcup_{j=0}^{N_0}f^{-j}\{p\}$ is $\delta_1/4$-dense
in $A$ and disjoint from the critical/singular set $\mathcal{C}$.
\fpr

A similar statement was proved in \cite[Lemma 5.6]{ABV} under some
 stronger assumptions in the definition of condition NUE.
We fix once and for all a point $p \in
A$ and \( N_{0}\in \mathbb N \) satisfying the conclusions of
Proposition~\ref{l.dense}, i.e. such that the set of
preimages of \( p \) up to \( N_{0} \) is \( \delta_{1}/4 \)-dense
in \( A \). For sufficiently small
\[ \delta_{0} \ll {\delta_{1}}
\]
where the conditions on \( \delta_{0} \) will be determined below,
we define the (Leb mod 0) subsets of $A$
\begin{equation}\label{disk}
\Delta = B(p,\delta_{0})\qand \Delta' = B(p,2\delta_{0}).
\end{equation}
As a relatively straightforward corollary of Proposition~\ref{pr.balls} we shall prove  that every ball of sufficiently large size, i.e. of radius at least \( \delta_{1} \), has a subset which maps diffeomorphically with bounded distortion onto \( \Delta' \) within a uniformly bounded number of iterations.

\cco \label{andadinha} If \( \delta_{0} \) is
sufficiently small, then
there are constants $D_0,K_0$ %and $K_1$
%depending only on $f$, $\sigma$, $\delta_1$ and the point $p$,
such that for any ball $\tilde B$ of radius $\delta_{1}$ with $\leb(\tilde B\setminus A)=0$ there are an open set $V\subset \tilde B$ and an
integer $0\leq m\leq N_0$ for which:
\begin{enumerate}
\item $f^m$ maps $V$ diffeomorphically onto $\Delta'$;
\item for each $x, y\in V$
$$
\log\left|\frac{\det Df^m(x)}{\det Df^m(y)}\right|\leq D_0
\dist(f^m(x), f^m(y));
$$
\item for each $0\leq j\leq m$
and for all $x\in f^{j}(V)$ we have
$$
K_{0}^{-1}\leq \|Df^{j}(x)\|, \|(Df^{j}(x))^{-1}\|, |\det
Df^{j}(x)|\leq K_0;
 %\text{ and } K_{0}^{-1}\leq\|Df^{j}(x)\|\leq K_0 |\det Df^{j}(x)|,.
$$
in particular  \( f^{j}(V)\cap \cc = \emptyset \) .
\end{enumerate}
\fco

To prove Proposition~\ref{pr.balls} we introduce the fundamental notion of
a \emph{hyperbolic time} which will play a key role also in subsequent sections. This notion in the form in which we formulate it here was first defined and applied in \cite{Alv00}.
We fix once and for all $B>1$ and $\beta >0$ as in
Definition~\ref{criticalset}, and  take a constant $b > 0$ such that
$2b<\min\{1, \beta^{-1}\}$.

\cd Given $0<\sigma<1$ and $\delta > 0$, we say that $n$ is a
$(\sigma, \delta)$-{\em hyperbolic time\/} for $x\in M$ if for all
$1\le k \le n$,
\begin{equation}\label{d.hyperbolic1}
\prod_{j=n-k+1}^{n}\|Df({f^{j}(x)})^{-1}\| \le \sigma^k \qand
\dist_{\delta}(f^{n-k}(x), \mathcal{C})\geq \sigma^{bk}.
\end{equation}
In the case $\mathcal{C} = \emptyset$ the definition of hyperbolic
time reduces to the first condition in ~(\ref{d.hyperbolic1}).\fd
We denote
\[
H_j(\sigma, \delta)= \{x\in M :\text{ $j$ is a
$(\sigma, \delta)$-hyperbolic time for $x$}\}.
\]
A fundamental consequence of properties NUE and SR is the existence
of hyperbolic times as in the following result whose proof can be
found in \cite[Lemma 5.4]{ABV}.
\cle \label{existhiptimes}  There
are $\delta > 0$,
$0<\sigma<1$ and $\theta > 0$ %, depending only on $f$ and $\lambda$,
such that
$$\limsup _{n\to\infty}\frac{1}{n}\#\{1\leq j\leq n: x\in H_j(\sigma, \delta)\} \geq
 \theta, $$
 for every $x \in A$.
\fle

From now on we consider \( \delta, \sigma, \theta \) fixed as in
Lemma~\ref{existhiptimes} and let \( H_{j}=H_{j}(\delta,
\sigma )\).

\cre \label{c.hyperbolic1} It easy to see that if $x \in H_j$ for a given $j\in\NN$, then
$f^i(x) \in H_m$ for any $1\le i<j$  and $m=j-i$ . %In fact
%from \eqref{d.hyperbolic1} result that
%\begin{equation*}
%\prod_{j=n-k+1}^{n}\|Df({f^{j}(x)})^{-1}\| \le \sigma^k \text { for
%} l+1 \leq k\leq n\Rightarrow
%\prod_{j=m-k'+1}^{n}\|Df({f^{j}(f^l(x))})^{-1}\| \le \sigma^{k'},
% \text { for } 1 \leq k'\leq m
%\end{equation*}
%and
%\begin{equation*}
% \dist_{\delta}(f^{n-k}(x), \mathcal{C})\geq\sigma^{bk} \text { for } 1 \leq k\leq l\Rightarrow
%\dist_{\delta}(f^{m-k'}(f^l(x)), \mathcal{C})\geq\sigma^{bk'}\text {
%for } 1 \leq k'\leq m.
%\end{equation*}
 \fre
 The next lemma gives the main properties of the
hyperbolic times  such as uniform backward contraction and bounded
distortion. For the proof see \cite [Lemma~5.2, Corollary~5.3]{ABV}.

\cle \label{l.contraction} There exists $\delta_1, C_{1}>0$
such that if $n$ is a $(\sigma, \delta)$-hyperbolic time for $x$,
then there is neighborhood $V_n$ of $x$ such that:
\begin{enumerate}
\item $f^{n}$ maps $V_n$ diffeomorphically onto a
ball of radius $\delta_1$ around  $f^{n}(x)$;
\item \label{contration} for every $1\le k
\le n$ and $y, z\in V_n$, $$ \dist(f^{n-k}(y),f^{n-k}(z)) \le
\sigma^{k/2}\dist(f^{n}(y),f^{n}(z));$$
\item \label{p.distortion} for any
$y,z\in V_n$
$$
\log \frac{|\det Df^{n} (y)|}
                     {|\det Df^{n} (z)|}
            \le C_1 \dist(f^{n}(y),f^{n}(z)).
$$
%\item \label{p.distortion2} for any
%$Y, Z\subset V_n$
%$$
%\frac{1}{C_2}\frac{\leb(Y)}{\leb(Z)}\leq
%\frac{\leb(f^n(Y))}{\leb(f^n(Z))} \leq C_2\frac{\leb(Y)}{\leb(Z)}.
%$$
\end{enumerate}
\fle
We call the sets \( V_n \)  \emph{hyperbolic pre-balls} and
their images \( f^{n}(V_n) \)  \emph{hyperbolic balls}. The latter are actually balls of radius \( \delta_1>0 \). Notice that  $\delta_1>0$ can be taken arbitrarily small for a fixed choice of $\delta>0$. %For the next lemma we take $2\delta_1<\delta<1$.

\cle \label{co.contraction} Assume that $2\delta_1<\delta<1$. There is $C_2>0$ such that if $n$ is a $(\sigma, \delta)$-hyperbolic time for $x$ and $V_n$ is the corresponding hyperbolic pre-ball, then:
\begin{enumerate}
\item for every $1\le k
\le n$ and $y\in V_n$, $$ \dist(f^{k}(y),\cc) \ge \frac12
\min\{\delta, \sigma^{b(n-k)}\} ;$$
\item  for any
Borel sets $Y, Z\subset V_n$,
$$
\frac{1}{C_2}\frac{\leb(Y)}{\leb(Z)}\leq
\frac{\leb(f^n(Y))}{\leb(f^n(Z))} \leq C_2\frac{\leb(Y)}{\leb(Z)};
$$
 \item there is  $\tau_n >0$ such that for any $x\in
H_n$ one has $B(x,{\tau_n}) \subset V_n$. In particular, every \( H_{n} \) is covered by a finite number of hyperbolic pre-balls.
\end{enumerate}
\fle
\dem
%It follows from Lemma~\ref{l.contraction} that for
%all $y \in V_n$ and  all $0\le k \le n$
% \begin{equation}\label{eq.dois}
% \dist(f^k(y),f^k(x))\le \delta_1 \sigma^{(n-k)/2}.
% \end{equation}
% On the other hand, s
Since $n$ is a hyperbolic time for $x$,  then
 using the second item of Lemma~\ref{l.contraction} we obtain
  \begin{eqnarray}
  \dist (f^{k}(y),\cc)
  &\ge &\dist(f^{
  k}(x),\cc)-\dist(f^k(x),f^k(y))\nonumber\\
  &\ge &\dist(f^{k}(x),\cc)-\delta_1 \sigma^{(n-k)/2}\label{eq.chato}
  %&\ge & \frac12 \sigma^{b(n-k)},
  \end{eqnarray}
  Now, if $\dist(f^{k}(x),\cc)=\dist_\delta(f^{k}(x),\cc)$, recalling that we have taken $b<1/2$, then
  using~\eqref{eq.chato}  and the definition of hyperbolic time we get
   $$\dist (f^{n-k}(y),\cc) \ge \frac12  \sigma^{b(n-k)},$$
  as long as $\delta_1<1/2$. Otherwise, we have $\dist(f^{k}(x),\cc)\ge \delta$, and so
  $$\dist (f^{k}(y),\cc) \ge \frac12  \delta,$$
  as long as $\delta_1<\delta/2$.
  This proves the first item.

  Let us now prove the second item.
  By a change of variables induced by $f^n$ we may write
\begin{eqnarray*}\displaystyle
% \nonumber to remove numbering (before each equation)
  \frac{\leb(f^n(Y))}{\leb(f^n(Z))} &=& \frac{\int_{Y}|\det Df^n(y)| d\leb(y)}{\int_{Z} |\det Df^n(z)| d\leb(z)}\\
   &=& \frac{|\det Df^n(y_0)|\int_{Y}\left|\frac{\det Df^n(y)}{\det Df^n(y_0)}\right| d\leb(y)}
   {|\det Df^n(z_0)|\int_{Z}\left|\frac{\det Df^n(z)}{\det Df^n(z_2)}\right| d\leb(z)},%\\
%   &\le & C_1^3 \frac{m (Y)}{m(Z)}
\end{eqnarray*}
where   $y_0$ and $z_0$ are chosen arbitrarily in $ Y$ and $ Z$,
respectively. Using the third item of Lemma~\ref{l.contraction} we
easily find uniform bounds for this expression.

To prove the third item, we observe that from the first item we can
take a neighborhood $\mathcal{N}_n$ of the critical/singular set  such that
$V_n\subset  M\setminus \mathcal N_n$. Hence, there is a constant
$K_n$  depending only on the hyperbolic time $n$ such that $
\|Df^n|_{V_n}\|\leq K_n $ and so the result follows.\cqd

%To prove the third item, we observe that from the first item we can take
%a neighborhood $\mathcal{N}_n$ of the singular set  such that $V_n\subset  M\setminus \mathcal N_n$. Hence, there is a constant $K_n$  depending only on the hyperbolic time $n$ such that
%$
%\|Df^n|_{V_n}\|\leq K_n
%$ and
%so the result follows. \cqd

 \dem[Proof of Proposition \ref{pr.balls}]
 For the first part of the Proposition,
 it is enough to prove that there exist balls of radius
$\delta_1/4$ where the relative measure of $A$ is arbitrarily close
to one. Since the set of points with infinitely many  hyperbolic
times is positively invariant and $A$ also is positively invariant,
we may assume, without loss generality, that every point in $A$ has
infinitely many hyperbolic times. Let $\epsilon >0$ be some small
number. By regularity of $\leb$, there is a compact set $A_c\subset
A$ and open set $A_0\supset A $ such that
\begin{equation}\label{m.regular}
\leb(A_0\setminus A_c) <\epsilon\leb(A ).
\end{equation}
Assume that $n_0$ is large enough so that for every $x\in A_c$, any
hyperbolic preball $V_n(x)$ with $n\geq n_0$ is contained in $A_0$.
Let $W_n(x)$ be a part of $V_n(x)$ that is sent diffeomorphically by
$f^n$ onto the ball $B(f^n(x), {\delta_1/4 })$. By compactness there
are $x_1, \ldots, x_r \in A_c$ and $n(x_1), \ldots, n(x_r)\geq n_0$
such that
\begin{equation}\label{cobertura}
A_c \subset W_{n(x_1)}(x_1)\cup\ldots \cup W_{n(x_r)}(x_r).
\end{equation}
For the sake of notational simplicity we shall write for each $1\leq
i \leq r$
$$
V_i=V_{n(x_i)}(x_i), \quad W_i= W_{n(x_i)}(x_i) \qand n_i=n(x_i).
$$
Let $n^*_1< n^*_2< \ldots< n^*_s$ be the distinct values taken by the $n_i$'s.
Let $I_1\subset\NN$ be a maximal subset of $\{1,\ldots, r\}$ such
that for each $i \in I_1$ both $n_i=n^*_1$, and $W_i\cap
W_j=\emptyset$ for every $j\in I_1$ with $j\neq i$. Inductively, we
define $I_k$ for $2\leq k \leq s$ as follows: supposing that
$I_1,\ldots, I_{k-1}$ have already been defined, let $I_k$ be a
maximal set of $\{1,\ldots,r\}$ such that for each $i \in I_k$ both
$n_i=n^*_k$, and $W_i\cap W_j = \emptyset$ for every $j \in
I_1\cup\ldots\cup I_k$ with~$i \neq j$.

Define $I= I_1\cup\ldots\cup I_S$. By construction we have that
$\{W_i\}_{i\in I}$ is a family of pairwise disjoint sets. We claim
that $\{V_i\}_{i\in I}$ is a covering of $A_c$. To see this, recall
that by construction , given any $W_j$ with $1\leq j\leq r$, there
is some $i \in I$ with $n(x_i)\leq n(x_j)$ such that $W_{x_j}\cap
W_{x_i}\neq\emptyset$. Taking images by $f^{n(x_i)}$ we have
$$
f^{n(x_i)}(W_j)\cap B(f^{n(x_i)}(x_i), {{\delta_1}/4})\neq\emptyset.
$$
It follows from  Lemma \ref{l.contraction}, item (\ref{contration})
that
$$
\diam(f^{n(x_i)}(W_j))\leq\frac{\delta_1}{2}\sigma^{(n(x_j)-n(x_i))/2}\leq\frac{\delta_1}{2},
$$
and so
$$
f^{n(x_i)}(W_j)\subset B(f^{n(x_i)}(x_i), {{\delta_1}}).
$$
This gives that $W_j\subset V_i$. We have proved that given any
$W_j$ with $1\leq j \leq r$, there is $i \in I$ so that $W_j\subset
V_i$. Taking into account (\ref{cobertura}), this means that
$\{V_i\}_{i\in I}$ is a covering of $A_c$.

 By Lemma \ref{l.contraction}, item  (\ref{p.distortion}) one may find
$\tau>0$ such that
$$
\leb(W_i)\geq \tau \leb(V_i), \quad \text{for all } i \in I.
$$
Hence,
\[ 
       \nonumber  \leb \left(\bigcup_{i \in I}W_i\right) = \sum_{i \in I}\leb(W_i) 
        \nonumber  \geq \tau\sum_{i \in I}\leb(V_i) 
        \nonumber \geq \tau \leb\left(\bigcup_{i \in I}V_i\right) 
        \nonumber  \geq\tau \leb(A_c).
\]
From \eqref{m.regular} one deduces that
$\leb(A_c)>(1-\epsilon)\leb(A )$. Noting that the constant $\tau$
does not depend on $\epsilon$, choosing $\epsilon>0$ small enough we
may have
\begin{equation}\label{desigual1}
\leb\left(\bigcup_{i \in I}W_i\right)>\frac{\tau}{2}\leb(A ).
\end{equation}
We are going to prove that
\begin{equation}\label{desigual2}
    \frac{\leb (W_i\setminus A)}{\leb(W_i)}< \frac{2\epsilon}{\tau},
    \quad \text{for some } i\in I.
\end{equation}
This is enough for our purpose, since taking $B= f^{n(x_i)}(W_i)$ we
have by invariance of $A$ and Lemma \ref{l.contraction}, item
(\ref{p.distortion})
$$
\frac{\leb(B\setminus
A)}{\leb(B)}\leq\frac{\leb(f^{n(x_i)}(W_i\setminus
A))}{\leb(f^{n(x_i)}(W_i)}\leq C_0\frac{\leb(W_i\setminus
A)}{\leb(W_i)}= \frac{2 C_0\epsilon}{\tau},
$$
which can obviously be made arbitrarily small. From this one easily
deduces that there are disks of radius $\delta_1/4$ where the
relative measure of $A$ is arbitrarily close to one.

Finally, let us prove (\ref{desigual2}). Assume, by contradiction,
than it does not hold. Then, using (\ref{m.regular}) and
(\ref{desigual1})
\[
  \epsilon\leb(A) >\leb(A_0\setminus A_c) 
\geq \leb \left(\left(\bigcup_{i\in I}W_i\right)\setminus A\right) 
\geq\frac{2\epsilon}{\tau}\leb\left(\bigcup_{i\in I}W_i\right) 
\geq \epsilon\leb(A).
\]
This gives a contradiction and proves the first part of the Proposition.

For the second part,  since we are assuming that $f$ is
a local diffeomorphism up to a set of zero Lebesgue measure, then
the  set
$$
\mathcal{B}=\bigcup_{n \geq 0}f^{-n}\left(\bigcup_{m \geq
0}f^{m}(\mathcal{C})\right)
$$
has Lebesgue measure equal to zero. On the other hand,  there is a positive Lebesgue
measure subset of points in $A$ with dense orbit. Thus there must be
some point $q\in A\setminus \mathcal{B}$ with dense orbit in $A$.
Take $N_0\in\NN$ for which $q, f(q), \ldots , f^{N_0}(q)$ is
$\delta_1/4$-dense in $A$ and $f^{N_0}(q)\in B$. The point $p=f^{N_0}(q)$
satisfies the conclusion of the lemma. \cqd

 \dem[Proof of Corollary \ref{andadinha}]  The proof is similar to \cite[Lemma 2.6]{ALP},
though we repeat it here  in order
to clarify the fact that also it holds in the situation where \( A
\) is not necessarily equal to \( M \).
 Since $\cup_{j=0}^{N_0} f^{-j}\{p\} $ is
 disjoint from $\mathcal{C}$,
 then choosing $\delta_0$ sufficiently small we have
that each connected component of the preimages of $B(p,2\delta_0)$
up to time $N_0$ is bounded away from the critical/singular set $\mathcal{C}$
and is contained in a ball of radius ${\delta_{1}/4}$. Moreover,
$\cup_{j=0}^{N_0} f^{-j}\{p\} $ is \( \delta_{1}/4 \)-dense in \( A
\) and this immediately implies that any ball $B$ as in the statement of the lemma contains a preimage
$V$ of $B(p,2\delta_0)$ which is mapped diffeomorphically onto
$B(p,2\delta_0)$ in most $N_0$ iterates, thus giving (1).  Moreover,
since the number of iterations and the distance to the critical
region are uniformly bounded, we immediately get (2) and (3). \cqd

\section {Markov structure and recurrence times}
\label{s.markov}

In this section we give the complete construction of the induced map
\( F: \Delta \to \Delta \) and prove the required properties. We divide the section into four subsections. In \ref{se.algo} we give the purely combinatorial algorithm for the construction. In
\ref{se:expdist} we show that all partition elements constructed according to this algorithms satisfy the required expansion and distortion properties. In \ref{se:sat} we show that the algorithm actually gives a partition mod 0 of \( \Delta \) in the sense that Lebesgue almost every point of \( \Delta \) belongs to the interior of some partition element. Finally, in \ref{s.inducing} we prove the integrability of the return times.

\subsection{The partitioning algorithm}\label{se.algo}

We start by describing an inductive construction of the $\cp$ partition
($\leb $ mod 0) of $\Delta $.
Given a point $x \in H_{n}\cap\Delta$, by
Lemma~\ref{l.contraction} there exists a hyperbolic pre-ball $V_{n
}(x)$ such that $
 f^{n}(V_{n}(x))= B(f^{n}(x),{\delta_1}).
$ From Proposition~\ref{andadinha}, there are a set $V \subset
B(f^{n}(x),{\delta_1})$ and an integer $0\leq m\leq N_0$ such that
$f^{m }(V)=\Delta'\supset\Delta$. Define
\begin{equation}\label{candidate}
U_{n,m }^{x} = (f|_{V_{n}(x)} ^{n,m})^{-1}(\Delta).
\end{equation}
These sets $U^x_{n,m}$ are the candidates for elements of the
partition of \( \Delta \) corresponding to the induced map \( F \)
since they are mapped onto \( \Delta \) with uniform expansion and
bounded distortion. Notice that the sets $U^{x}_{n,m}$ and \(
U^{x'}_{n',m'} \) for distinct points \( x, x' \) are not
necessarily disjoint and this is a major complication in the
construction. The strategy for dealing with this is additionally
complicated by the fact that $U^{x}_{n,m}$ does not necessarily
contain the point $x$. To deal with these issues,
we introduce sets $\Delta_n$ and ${S}_n$ such that $\Delta_n$ is the
part of $\Delta$ that has not been partitioned up to time $n$, and ${S}_n$, that we
call the \emph{satellite set}, corresponding to the portion of a
reference hyperbolic pre-ball that was not used for constructing an
element of the partition. It is important to note that to each step
of the algorithm is associated a unique hyperbolic time and possibly
several distinct return times.

\subsubsection*{First step of induction}
We fix  some large \(n_0\in\NN \)  and ignore any dynamics occurring
up to time \( n_{0} \).
Define $\Delta^c=M\setminus\Delta$.
By  the third item of Lemma~\ref{co.contraction}, \( H_{n_{0}} \) can be covered by a finite number of hyperbolic pre-balls, and thus there are $z_1,\dots, z_{N_{n_0}}\in H_{n_0}$ such that
 $$H_{n_0}\subset V_{n_0}(z_1)\cup\cdots \cup V_{n_0}(z_{N_{n_0}}).$$
%First we observe that the set $H_{n_0}\cap \Delta$ is contained in a
%finite union of balls centered at points $x_0, \ldots,
%x_{N_{n_0}}$ as in Lemma~\ref{co.contraction} item 3. For this
%reason we write
%$$
% \widetilde{H}_{n_0}=\left\{x_i \in H_{n_0}\cap \Delta :
%H_{n_0}\cap \Delta \subset \bigcup
% _{i=0}^{N_{n_0}}B_{\tau_{n_0}}(x_i)\right\}.
%$$
Consider  a maximal family
$$
\mathcal{U}_{n_0}=\{U_{n_0,m_1}^{x_1},\ldots,
U_{n_0,m_{k_{n_0}}}^{x_{k_{n_0}}}\}
$$
of pairwise disjoint sets of type (\ref{candidate}) contained in
$\Delta$ with $x_1,\dots,x_{k_{n_0}}\in\{z_1,\dots, z_{N_{n_0}}\}$. These are the elements of the partition $\cp$ constructed
in the
$n_0$-step  of the algorithm. % See that if $x\in \mathcal{U}_{n_0}$,
%thus there exists some $i\in \NN$ such that $x\in U^{x_i}_{n_0+m_i}$
%and $m_i\leq N_0$ .
Set $R(x)=n_0+m_i$ for each $x\in
U^{x_i}_{n_0,m_i}$ with $0\leq i \leq k_{n_0}$.
Now let
$$\widetilde H_{n_0}=\{z_1,\dots, z_{N_{n_0}}\}
\setminus\{x_1,\dots, x_{k_{n_0}}\}.$$
be the set of points in $\{z_1,\dots, z_{N_{n_0}}\}$ which
were not ``used'' in the construction of $\mathcal{U}_{n_0}$. Notice that the reason they were not used is that the associated sets of the form \( U_{n_{0}, m} \) overlap one of the sets in \( \mathcal U_{n_{0}} \) which were selected. We want to keep track of which sets overlap which and so, for each given  $U\in
\mathcal{U}_{n_0}$, and each $0\le m\le N_0$, we define
\begin{equation}\label{auxiliosatelite0}
    H_{n_0}^m(U)=\left\{x \in \widetilde H_{n_0}: U^x_{n_0,m}\cap U \neq\emptyset\right\}
\end{equation}
and the  \emph{$n_0$-satellite}
\begin{equation*}
    S_{n_0}(U)= \bigcup_{m=0}^{N_0}\bigcup_{x\in
    H_{n_0}^m(U)}V_{n_0}(x)\cap (\Delta\setminus U).
\end{equation*}
Thus, the  \( n_{0} \)-satellite of \( U \) is the union of all hyperbolic pre-balls which ``could have'' had a subset returning to \( \Delta \) but were unlucky in that such a subset overlaps the set \( U \) which was chosen instead.
It will be convenient to consider also the $n_0$-satellite
associated to $\Delta^c$ %, then we get from \eqref{auxiliosatelite0}
%and \eqref{satelite0}
%\begin
%    H_{n_0}^m(\Delta^c)=\{x \in H_{n_0}: U^x_{n_0+m}\cap \Delta^c \neq\emptyset\}
%\end{equation*}
%and
\begin{equation*}
    S_{n_0}(\Delta^c)= \bigcup_{m=0}^{N_0}\bigcup_{x\in
    H_{n_0}^m(\Delta^c)}V_{n_0}(x)\cap \Delta.
\end{equation*}
Finally we define the \emph{global $n_0$-satellite}
\begin{equation}\label{satelite_n0}
{S}_{n_0} = \bigcup_{U\in \mathcal{U}_{n_0}}{S}_{n_0}(U) \cup
{S}_{n_0}(\Delta^c)
\end{equation}
and
\begin{equation}\label{delta_n0}
\Delta_{n_0} = \Delta\setminus \bigcup_{U \in \mathcal{U}_{n_0}}U.
\end{equation}

\subsubsection*{General step of induction}
The general step of the construction follows the ideas above with
minor modifications. Assume that the set $\Delta_{s}$ is defined for
each \( s\leq n-1 \). Once more by  the third item of
Lemma~\ref{co.contraction} there are $z_1,\dots, z_{N_{n}}\in H_{n}$
such that $$H_{n}\subset V_{n}(z_1)\cup\cdots \cup
V_{n}(z_{N_{n}}).$$ Consider a maximal family
$$
\mathcal{U}_{n}=\{U_{n,m_1}^{x_1},\ldots,
U_{n,m_{k_n}}^{x_{k_n}}\}
$$
of pairwise disjoint sets of type (\ref{candidate})~contained in
$\Delta_{n-1}$  with $x_1,\dots,x_{k_{n}}\in\{z_1,\dots, z_{N_{n}}\}$. These are the elements of the partition $\cp$
constructed in the $n$-step  of algorithm. Set $R(x)=n+m_i$ for each
$x\in U^{x_i}_{n,m_i}$ with $0\leq i \leq k_n$.
Let
$$\widetilde H_{n}=\{z_1,\dots, z_{N_{n}}\}\setminus\{x_1,\dots, x_{k_{n}}\}.$$
Given $U\in
\mathcal{U}_{n_0}\cup\cdots\cup \mathcal{U}_n$, we define for $0\le m\le N_n$
\begin{equation}\label{auxiliosatelite}
    H_{n}^m(U)=\left\{x \in \widetilde H_{n}: U^x_{n,m}\cap U \neq\emptyset\right\}
\end{equation}
and its \emph{$n$-satellite}
\begin{equation*}
    S_{n}(U)= \bigcup_{m=0}^{N_n}\bigcup_{x\in
    H_{n}^m(U)}V_{n}(x)\cap (\Delta\setminus U).
\end{equation*}
It will be convenient to consider also the \emph{$n$-satellite}
associated to $\Delta^c$
\begin{equation*}
    S_{n}(\Delta^c)= \bigcup_{m=0}^{N_n}\bigcup_{x\in
    H_{n}^m(\Delta^c)}V_{n}(x)\cap (\Delta\setminus \Delta^c).
\end{equation*}
Finally we define the \emph{global $n$-satellite}
\begin{equation}\label{satelite_n}
{S}_{n} = \bigcup_{U\in \mathcal{U}_{n_0}\cup\cdots\cup\,
\mathcal{U}_n}{S}_{n}(U) \cup {S}_{n}(\Delta^c)
\end{equation}
and
\begin{equation}\label{delta_n}
\Delta_{n} = \Delta\setminus \bigcup_{U\in
\mathcal{U}_{n_0}\cup\cdots\cup\, \mathcal{U}_n
}U.
\end{equation}

\cre\label{re.saturacao}
Note that the construction of these objects has been performed in such a way that for each $n\ge n_0$ one has
 $$H_n\subset S_n\cup\bigcup_{U\in \mathcal U_{n_0}\cup\cdots\mathcal U_n }U.$$
\fre

\subsection {Expansion and bounded distortion}\label{se:expdist}

Recall that, by construction, the return time $R$ for an element $U$
of the partition $\cp$ of $\Delta$ is made by a certain number $n$
of iterations given by the hyperbolic time of a pre-ball
$V_{n}\supset U$, plus a certain number $m\leq N_{0}$ of additional
iterates which is the time it takes to go from $f^{n}(V_{n})$, which
could be anywhere in $M$, to $f^{n+m}(V_{n})$, which covers $\Delta$
completely. It follows from Lemmas ~\ref{l.contraction} and
Proposition~\ref{andadinha} that
\[
\|Df^{n+m}(x)^{-1}\| \le \|Df^{m}(f^{n}(x))^{-1}\|.
\|Df^{n}(x)^{-1}\| 
   \le K_{0}\sigma^{n/2} 
   \le K_{0}\sigma^{(n_{0}-N_{0})/2}.
\]
Taking $n_{0}$ sufficiently large we can make this last expression
smaller than one.
We also need to show that there exists a constant $K > 0$ such that
for any $x, y$ belonging to an element $U \in \cp$ with return time
$R$, we have
$$\log\left|\frac{\det Df^{R}(x)}{\det Df^{R}(y)}\right|\leq K \dist(f^{R}(x),
f^{R}(y)).$$ By Lemmas~\ref{l.contraction} and Proposition~\ref{andadinha},
it is enough to take $K = D_{0}+ C_{1}K_{0 }$.

\cre Analogously to Lemma~\ref{co.contraction}, there exists a constant $C_4 >0$ such that for any Borel sets $Y, Z
\subset \left(f|_{V_n }^{n+m}\right)^{-1}(\Delta')$ we have
$$
\frac{1}{C_4}\frac{\leb(Y)}{\leb(Z)}\leq
\frac{\leb(f^{n+m}(Y))}{\leb(f^{n+m}(Z))} \leq
C_4\frac{\leb(Y)}{\leb(Z)}.
$$ \fre

\subsection {The measure of satellites}\label{se:sat}

In this section, we will show that the algorithm described above does indeed produce a
partition ($\leb$ mod 0) of $\Delta$.
Notice first of all that since $\Delta\supset\Delta_{n_0}\supset \Delta_{n_0+1}\supset...$,
we only have to check that $\leb (\cap_{n}\Delta_{n})=0$. This is a consequence of the following

 \cpr \label{prop.Sn}
$\sum_{n=n_0}^{\infty}\leb({S}_{n}) <\infty$. \fpr

Indeed, it follows from Proposition \ref{prop.Sn} and the
Borel-Cantelli Theorem that Lebesgue almost every point in $\Delta$
belongs to finitely many ${S}_{n}'s$. Since a generic point
$x\in\Delta$ has infinitely many $\sigma$-hyperbolic times, it follows that for
almost every $x\in \Delta$ one can find $n$ such that $x\in H_{n}$
and $x\notin {S}_j$ for $j\ge n$.
Thus, recalling Remark~\ref{re.saturacao} one must have $x\in \{R=n+m\}$ for some
$0\leq m\leq N_0$. Since this is valid for Lebesgue almost all $x\in\Delta$,
then $\leb (\cap_{n}\Delta_{n})=0$.

Thus we just need to prove Proposition \ref{prop.Sn}. We shall prove first two auxiliary lemmas.
The first one gives in particular that $ U_{n,m }^{x} $ represents a positive proportion of $ V_{n}(x) $.

 \cle\label{estpreball} There exists
$C_{3}>0$ such that given any $n\ge 1$  and  any set of points $x_1,\dots ,x_N\in H_n$ such that  the corresponding \( U_{n,m}^{x_i} \) coincide, i.e. $U_{n,m}^{x_i}=U_{n,m}^{x_1}$ for $1\le i\le N$, then
\begin{equation*}\label{ultimah}
    \leb\left(\bigcup_{i=1}^{N} V_n(x_i)\right)\leq C_3\leb(
    U^{x_1}_{n,m}).
   \end{equation*}
\fle
 \dem
 For simplicity of notation we shall write for  $1 \leq i\leq
 N$,
 $$V_n(x_i)=
 V_i\qand  B(f^n(x_i), {\delta_1})= B_i .
$$
We define
$$
X_1= V_1 \qand X_{i}= V_i\setminus\bigcup_{j=1}^{i-1} V_j, \quad \text{for $2\le i\le N$}.
%\setminus \bigcup_{{\ell \neq
%i,j}}V_\ell \qand P=\bigcap_{i=1}^{N} V_i
$$
Similarly
$$
Y_1= B_1 \qand Y_{i}= B_i \setminus\bigcup_{j=1}^{i-1} B_j, \quad \text{for $2\le i\le N$}.
%\qand
%Q=f^{n}(U^{x}_{n,m}).
$$
 Observe that
$
V_1 \cup \cdots \cup V_N=X_1 \; \dot{\cup} \cdots \dot{\cup}\; X_N$ and $
B_1\cup \cdots\cup B_N =  Y_1 \; \dot{\cup} \cdots \dot{\cup}\; Y_N.
%\; \dot{\cup}\;Q.
$
Recalling that $U_{n,m}^{x_i}=U_{n,m}^{x_1}$ for $1\le i\le N$, by bounded distortion we have
\begin{equation*}
    \frac{\leb (X_i)}{\leb (U^{x_1}_{n,m}) }\leq C_2 \frac{\leb (Y_i)}{\leb f^n(
    U^{x_1}_{n,m})}. %\quad ,
% \quad \quad
%    \frac{\leb (P_{ij})}{\leb (U^{x}_{n,m})}\leq C_2 \frac{\leb Q_{ij}}{\leb f^n( U^{x}_{n,m})}
\end{equation*}
Hence
\[
 \frac{\leb(V_1 \cup \ldots \cup V_N)}{\leb (U^{x_1}_{n,m})}
  =
  \frac{\sum_{i=1}^{N}\leb (X_i)
   }{\leb ( U^{x_1}_{n,m})}
  \leq C_2\frac{\sum_{i=1}^{N}\leb (Y_i)
   }{\leb (f^n( U^{x_1}_{n,m}))} 
   \leq C_2\frac{\leb (B_1\cup \ldots \cup B_N)}{\leb (f^n(
    U^{x_1}_{n,m}))}.
\]
    Moreover, by a change of variables we have
$$
\int_{\Delta}d\leb(z )= \int_ {f^n(U^{x}_{n,m})}|\det
Df^m(y)|d\leb(y).
$$
%Follows
%$$
%\leb(\Delta)\leq K_0\leb(f^n(U^{x}_{n,m}))
%$$
 So, by Proposition~\ref{andadinha},
$$
\leb(f^n(U^{x}_{n,m}))\geq K_0^{-1}\leb(\Delta).
$$
Therefore the result follows taking $ C_3=C_2K_0{\leb (M)}\backslash
\leb(\Delta). $ \cqd

The next lemma shows that, for each $n$ and $m$ fixed, the Lebesgue
measure of the union of candidates $U^{x}_{n,m}$ which intersects an
element of partition is proportional to the Lebesgue measure of this
element. The proportion constant can actually be made uniformly
summable in $n$.

 \cle\label{estimativas}There
exists  $C_5> 0$ such that given $0\le m\le N_0$, $k\ge n_0$ and $U\in
\mathcal{U}_k$, then for any $n\ge k$ %the following hold:
%\begin{enumerate}
%\item If $k \leq n \leq k+LN_0$, then
%\begin{equation}\label{estimitem1}
%\leb\left(\bigcup_{x \in
%H_n^m(U)}U_{n,m}^x\right)\leq C_5\leb (U).
%\end{equation}
%%for all $U_\gamma$ of the form $U^{x}_{n,m}$ with $m$ fixed and
%%$U_\gamma\cap U^{x_0}_{k+m_0}\neq\emptyset$.
%\item If $n> k+LN_0$, then
%\begin{equation}\label{estimitem2}
$$
\leb \left(\bigcup_{x \in
H_n^m(U)}U_{n,m}^x\right)\leq C_5\sigma^{\frac{n-k}{2}} \leb(U).
$$
%\end{equation}
%for all $U_\gamma$ of the form $U^{x}_{n,m}$ with $m$ fixed and
%$U_\gamma\cap U^{x_0}_{k+m_0}\neq\emptyset$.
%\end{enumerate}
\fle

\dem Consider an integer $k\ge n_0$ and a set $U\in \mathcal{U}_k$. Recall that by construction we have $R\vert U=k+m_0$ for some
 $0\le m_0\le N_0$.  Moreover, $U$ is part of a hyperbolic preball $V_k$  which is sent diffeomorphically onto $\Delta$ by $f^{k+m_0}$; recall~\eqref{candidate}. We
define
$$T= (f|_{V_{k}} ^{k+m_0})^{-1}(\Delta'\setminus \Delta).$$
Now consider
$L\in \NN$ large so that
$$
 2K_0\sigma^{(L-1)N_0/2}< 1,
$$
where $N_0$ and $K_0$ are given by Proposition~\ref{l.dense} and
Proposition~\ref{andadinha} respectively.
We shall split the proof into two separate cases depending on whether $k \leq n\leq k+LN_0$ or $n> k+LN_0$.\smallskip

(1) Assume first that $k \leq n\leq k+LN_0$. Fix some set $U^x_{n,m}$ with $x \in
H_n^m(U)$.
%
%
%\textbf{Next scene: For the  first $n$'s I need only a rude
%estimative. The idea is estimate the measure of $U^x_{n,m}$'s from
%the measure $T$. For this reason I will show that $\leb(U^x_{n,m})$
%is proportional to $\leb(U^x_{n,m}\cap T)$. I need prove that
%$f^{n,m} (U_{n,m}^x\cap T)$ contain a ball $B'$ with radius only
%depending of $\delta_0$ and the map ~$f$. Therefore the Lebesgue
%measure $f^{n,m} (U_{n,m}^x\cap T)$ will be limited below. }
Having in mind the conclusion we need, it is no restriction to assume that  $U^x_{n,m}$ intersects the complement of $U\cup T$ (observe that $U\cup T$ is obviously a proportion of $U$). Hence, there is
%\begin{figure}[ht]
%  % Requires \usepackage{graphicx}
%  \centering
%  \fbox{\includegraphics[height= 7 cm]{Desenho1.eps}}
%  \caption{Case1}
%  \label{fig:case1}
%\end{figure}
a point $u\in U_{n,m}^x\cap T$ %with $v=f^{n,m}(u)$ and
for which $v=f^{k+m_0}(u)$ satisfies $\dist(v,p)=3\delta_0/2$.
\medskip

\noindent{\sc Claim:}  \emph{There is a uniform constant $\tilde\rho>0$ for which $\leb(f^{n,m}(U_{n,m}^x\cap T))\ge \tilde\rho.$}
\medskip

\noindent Consider first that $n+m>k+m_0$. Considering $\ell=
(n+m)-(k+m_0)$ we have
 $0\le \ell\le LN_0+N_0.$
%Let  $w=f^{\ell}(v)$.
Just by continuity there is $\rho>0$ and a neighborhood $V_\rho$ of $u$ such that both
 $$
f^{n+m}(V_\rho)=B(f^\ell(v),\rho)\cap \Delta\qand f^{k+m_0}(V_\rho)\subset \Delta'\setminus\Delta.
$$
Observe that $f$ sends $f^{k+m_0}(V_\rho)$ onto
$B(f^\ell(v),\rho)\cap \Delta$ in $\ell$ iterates. Moreover, when we
look back, we see that the $\ell$ backward iterates comprise a
certain number of at most $N_0$ backward iterates plus at most
$LN_0$ backward iterates of a hyperbolic ball. Thus, by
Proposition~\ref{andadinha} and Lemma~\ref{co.contraction} we
guarantee some uniform bound on the derivative of those backward
iterates. This means that it is possible to choose $\rho$ uniformly.
Hence, there $\tilde\rho$ (depending only on $\rho$) for which
 $$\leb(f^{n+m}(U_{n,m}^x\cap T))\ge \tilde\rho,$$
which gives the claim in this case.

Consider now  $n+m<k+m_0$. Taking in this case $\ell=(k+m_0)-(n+m)$, we have $\ell\le N_0$.
By continuity there is $\rho>0$ for which
$$
f^\ell(B(f^{n+m}(u),\rho)\cap \Delta)\subset \Delta'\setminus\Delta.
$$
By Proposition~\ref{andadinha} we have some uniform bound on the
derivative of the backward iterates of
$f^\ell(B(f^{n+m}(u),\rho)\cap \Delta)$. This means that it is
possible to choose $\rho$ uniformly, and so there exists $\tilde\rho$
(depending only on $\rho$) for which the claim again holds.

\medskip
Let us now use the claim to prove the first part of the lemma. Note that we can find $\xi>0$ such that
$$
\frac{\leb(\Delta'\setminus \Delta)}{\leb(\Delta)} \leq \xi,
$$
and so,  by bounded distortion in time $k+m_0$,
\begin{equation}\label{eq.T/U}
\frac{\leb(T)}{\leb(U)}\le C_4\frac{\leb(\Delta'\setminus
\Delta)}{\leb(\Delta)}\le C_4 \xi.%\le (2^d -1).
\end{equation}
By bounded distortion %given by
%Lemma~\ref{andadinha} and Corollary~\ref{co.contraction}
in the time $n+m$,
\begin{equation}\label{eq.U/TU}
\frac{\leb (U_{n,m}^x)}{\leb(U_{n,m}^x\cap T)}\le C_4
\frac{\leb(\Delta)}{\tilde\rho}.
\end{equation}
%and in time $k+m_0$
%$$
%\frac{\leb(T)}{\leb(U)}\lesssim\frac{\leb(\Delta'\setminus
%\Delta)}{\leb(\Delta)} .%\le C_4(2^d -1).
%$$
Now observe that for fixed $n,m$ and given $x,x'\in H_n^m(U)$ one must have
$U^{x}_{n,m}=U^{x'}_{n,m}$ or $U^{x}_{n,m}\cap U^{x'}_{n,m}=\emptyset$.
Since $U_{n,m}^x\cap T$ is contained in $T$, it follows from \eqref{eq.T/U}
and \eqref{eq.U/TU} that
$$
\leb\left(\bigcup_{x \in H_n^m(U)}U_{n,m}^x\right)\leq C_4^2\xi
\frac{\leb(\Delta)}{\tilde\rho}\leb(U).
$$

\smallskip

(2) Assume now that $n> k+LN_0$. Since for each
$U_{n,m}^x$ we have
$$
\diam( f^{k+m_0}(U_{n,m}^x))\leq 2\delta_0K_0\sigma^{n-(k+m_0)/2}
$$
then, by the choice of $L$, the sets $U_{n,m}^x$ are contained
in $T \cup U$ for  $n> k+LN_0$. Moreover, defining the annulus inside $\Delta'$ around the boundary of $\Delta$
 $$A_{n,k}=\{x \in \Delta' : \dist(x,\partial\Delta)\leq
    2\delta_0K_0\sigma^{n-(k+N_0)/2}
    \}
    $$
we have
 $$f^{k+m_0}(U_{n,m}^x)\subset A_{n,k}.$$
%
%\textbf{The worst of all cases would
%be if $f^{k+m_0}(U_{n,m}^x)$ was tangent to $\Delta$}. For
%simplicity of notation we will define the sets
%\begin{equation*}\label{conjB1}
%B_1=\{x \in \Delta\: \dist(x,\partial\Delta)\leq
%    2\delta_0K_0\sigma^{n-(k+N_0)/2}
%    \}
%\end{equation*}
%and
%\begin{equation*}\label{conjB2}
% B_2=\{x \in \Delta'\setminus\Delta\: \dist(x,\partial\Delta)\leq
%   2\delta_0K_0\sigma^{n-(k+N_0)/2}\}.
%\end{equation*}
%See the  next figure.
%\begin{figure}[h]
%  % Requires \usepackage{graphicx}
%  \centering
%  \fbox{\includegraphics[height=5 cm]{Desenho2.eps}}
%  \caption{Case2}
%  \label{fig:case2}
%\end{figure}
By bounded distortion
\begin{eqnarray}
\frac{\leb\left(\bigcup_{x \in H_n^m(U)}U_{n,m}^x\right)}{\leb (U)}&\leq& C_4
\frac{ \leb(A_{n,k})}{ \leb(\Delta)}.\nonumber  \label{est1}
\end{eqnarray}
Since there is a constant $\eta>0$ for which
 $$
\frac{\leb(A_{n,k})}{\leb(\Delta)}\leq \eta
\sigma^{\frac{n-k}{2}},
 $$
then we have
$$
\leb \left(\bigcup_{x \in H_n^m(U)}U_{n,m}^x\right)\leq \eta
C_4\sigma^{\frac{n-k}{2}}\leb (U).
$$
Take $C_5=\max \{C_4^2\xi {\leb(\Delta)}/{\tilde\rho}, \eta
C_4\}$. \cqd

We are now ready to complete the proof of the main proposition of this section.

\dem[Proof of Proposition \ref{prop.Sn}]
 Observe that
\begin{equation}\label{eq.sumsum}
\sum_{n=n_0}^{\infty}\leb({S}_{n}) \le
\sum_{n=n_0}^{\infty}\leb({S}_{n}({\Delta^c }))+
\sum_{k=n_0}^\infty\sum_{U\in
\mathcal{U}_k}\sum_{n=k}^{\infty}\leb({S}_{n}(U)).
%\leb{S}_{n}(\bar{\Delta})\leq \leb\mathbb{S}_0
%+\sum_{n=1}^{\infty}\sum_{k=1}^{n}\sum_{m_i=0}^{N_0}C_5 \lambda
%^{n-k}\leb\{R=k+m_i\}.
\end{equation}
We start by estimating  the sum with respect to the satellites of $\Delta^c$. It follows from the definition of
${S}_{n}({\Delta^c})$ and Lemma~\ref{l.contraction} that
$$
{S}_{n}({\Delta^c})\subset \{x \in \Delta:\,
\dist(x,\partial\Delta)<2\delta_1\sigma^{n/2}\}.
$$
Thus, we can find $\zeta>0$ such that
$$
\leb ({S}_{n}({\Delta^c}))\leq \zeta\sigma^{n/2}.
$$
This obviously implies that the part of the sum respecting $\Delta^c$ in~\eqref{eq.sumsum} is finite.

Consider now $k\ge n_0$ and $n\ge k$.
Fix  $U \in \mathcal{U}_k$ and consider ${S}_{n}(U)$ the
$n$-satellite associated to it.  By definition of $S_{n}(U)$ and
 Lemma \ref{estpreball} we have
\begin{equation}\label{item1}
  \leb(S_{n}(U)) \leq \sum_{m=0}^{N_0}\leb \left( \bigcup _{x\in
    H_{n}^m(U)} V_{n}(x) \right) 
  \leq C_3\sum_{m=0}^{N_0}\leb \left(\bigcup_{x\in
    H_{n}^m(U)} U_{n,m}^x\right)
\end{equation}
%and also
%\begin{eqnarray}\label{item2}
%% \nonumber to remove numbering (before each equation)
%  \leb(S_{n}(U)) &\leq& \sum_{U_{n,m}^{x}\cap U \neq\emptyset}\leb(V_n(x)\setminus U) \nonumber\\
%  &\leq& C_2\sum_{m=0}^{N_0}\leb \left(\bigcup_{\gamma} U_\gamma\cap T\right)
%\end{eqnarray}
For the last step observe that for fixed $n,m$ and given $x,x'\in H_n^m(U)$ one must have
$U^{x}_{n,m}=U^{x'}_{n,m}$ or $U^{x}_{n,m}\cap U^{x'}_{n,m}=\emptyset$.
%
%Now we consider the cases $k \leq n \leq k+LN_0$ and $n
%> k+LN_0$ separately.
%Assume first that $k\leq n\leq k+LN_0$. It follows from
%\eqref{item1} and Lemma \ref{estimativas} that
%\begin{eqnarray}
%% \nonumber to remove numbering (before each equation)
%  \sum_{n=k}^{k+LN_0}\leb(S_{n}(U)) &\leq& C_3C_5(N_0+1)\sum_{n=k}^{k+LN_0}\leb(U)\nonumber \\
%  &\leq& C_3C_5 (N_0+1)(LN_0 +1)\leb(U)\nonumber
%  \end{eqnarray}
%Hence
%\begin{eqnarray}
%% \nonumber to remove numbering (before each equation)
% \sum_{k=n_0}^\infty\sum_{U\in
%\mathcal{U}_k}\sum_{n=k}^{k+LN_0}\leb(S_{n}(U)) %& \leq&\sum_{k=1}^\infty\sum_{m_i=0}^{N_0}(LN_0 +1)C_5C_3 \leb\{R= k+m_i\}\nonumber \\
%  &\leq& C_3C_5(N_0+1) (LN_0 +1)\leb\Delta.
%\end{eqnarray}
%
%
%Assume now that $n > k+LN_0$.
Thus,  by  Lemma
\ref{estimativas}
\begin{eqnarray}
% \nonumber to remove numbering (before each equation)
  \leb(S_{n}(U))
  &\leq& C_3C_5(N_0+1)\sigma^{\frac{n-k}{2}}\leb(U)\nonumber
  \end{eqnarray}
Letting %$\lambda=\sigma^{1/2}$ and
$C=C_3C_5(N_0+1)$ it follows that
\begin{eqnarray*}
 \sum_{k=n_0}^\infty\sum_{U\in
\mathcal{U}_k}\sum_{n=k}^\infty\leb(S_n(U))
&\le&C\sum_{k=n_0}^\infty\sum_{U\in
\mathcal{U}_k}\sum_{j=0}^{\infty}\sigma^{j/2}\leb(U)\\
&=& C\frac{1}{1-\sigma^{1/2}}\sum_{k=n_0}^\infty\sum_{U\in
\mathcal{U}_k} \leb(U)\\
&\leq & C\frac{1}{1-\sigma^{1/2}}\leb(\Delta).
\end{eqnarray*}
This gives the conclusion.
\cqd

%\cre Through of construction we obtain that the elements of
%partition no overlap.\fre

\subsection{Integrability of the inducing times}
\label{s.inducing}

%One can prove that such $F=f^{R}$ has an absolutely continuous
%invariant measure $\nu$ with density uniformly bounded by some
%constant. Moreover, whenever $R$ is $\nu$-integrable, the measure
%defined for
%  $$\mu' = \sum_{j=0}^{\infty}f^{j}_{*}(\nu|\{R > j\}),$$
% is an absolutely continuous invariant finite measure with support
%contained in $\bigcup_ {j\geq 0}f^j(\Delta)$.
In the previous sections we proved the existence of a Lebesgue mod 0
partition $\mathcal{P}$ of~$\Delta$ and an inducing time function
\(R: \Delta \to \mathbb N \) which is constant in the elements of
$\cp$. Moreover, the map $F:\Delta\to \Delta$ defined for
$F(x)=f^{R(x)}(x)$ is a $C^2$ piecewise uniformly expanding
 map with
uniform bounded distortion. By \cite[Lemma~
 2]{Y1} such a map has a
unique absolutely continuous (with respect to Lebesgue measure)
ergodic invariant probability measure $\nu$ whose density is bounded
away from zero and infinity by
constants.  Thus in particular the integrability with respect to Lebesgue
of the return time function \( R \)
is equivalent to the integrability with respect to \( \nu \). To complete the proof it is therefore sufficient to prove the following

\begin{Proposition}
The inducing time function $R$ is $\nu$-integrable.
\end{Proposition}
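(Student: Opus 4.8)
The plan is to estimate $\nu(\{R>n\})$ by going back to the Lebesgue measure (since $\nu$ has density bounded above and below) and exploiting the exponential estimates already proved for the satellites. The key observation is that, by construction and Remark~\ref{re.saturacao}, a point $x\in\Delta$ with $R(x)>n$ must have avoided being captured in every step $n_0\le k\le n$; in particular, for every hyperbolic time $j\le n$ of $x$ one has $x\in S_j$, i.e. $x$ lies in the satellite set at every hyperbolic time it has had up to time $n$. Thus I would first write
\[
\{R>n\}\subset \bigcap_{j=n_0}^{n}\bigl(\{x\in H_j\}^{c}\cup S_j\bigr),
\]
and then combine this with the fact (Lemma~\ref{existhiptimes}) that every $x\in A\supseteq\Delta$ (mod $0$) has a positive frequency $\theta>0$ of hyperbolic times. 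Heuristically, a point surviving up to time $n$ must either have had very few hyperbolic times up to time $n$ (a large-deviation event with exponentially small measure, handled as in the standard hyperbolic-times literature, e.g.\ \cite{ABV}), or have fallen into $S_j$ for a definite proportion of its many hyperbolic times $j\le n$. The latter is controlled by Proposition~\ref{prop.Sn}: since $\sum_j\leb(S_j)<\infty$, a Borel--Cantelli / Chebyshev argument shows that the set of points lying in $S_j$ for at least, say, $\theta n/2$ values of $j\le n$ also has summably small (indeed exponentially small, after a more careful bookkeeping) Lebesgue measure.

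Concretely, I would carry out the following steps. First, recall that $\nu\le c\,\leb$ on $\Delta$ for some constant $c>0$, so it suffices to show $\sum_n\leb(\{R>n\})<\infty$, or equivalently $\int R\,d\leb<\infty$. Second, fix the frequency $\theta$ from Lemma~\ref{existhiptimes} and, using the large-deviations estimate for the density of hyperbolic times (this is where the hypotheses NUE and SR enter, via the Pliss-type argument of \cite{ABV}), show that
\[
\leb\bigl(\{x\in\Delta:\ \#\{1\le j\le n:\ x\in H_j\}<\tfrac{\theta}{2}n\}\bigr)\le C e^{-c n}
\]
for suitable $C,c>0$ and all large $n$. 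Third, on the complementary set, a surviving point $x$ has at least $\theta n/2$ hyperbolic times $j\le n$, and at each such $j$ (with $j\ge n_0$) it must lie in $S_j$; so $x\in S_j$ for at least $\theta n/2 - n_0$ indices. By Chebyshev applied to $\sum_{j=n_0}^n \mathbf 1_{S_j}$ together with $\sum_j\leb(S_j)<\infty$, the measure of such points is at most $\bigl(\tfrac{\theta}{2}n-n_0\bigr)^{-1}\sum_{j=n_0}^{\infty}\leb(S_j)$, which already gives $\leb(\{R>n\})=O(1/n)$; to get summability one refines this by splitting the tail $\sum_{j\ge n_0}\leb(S_j)$ and using that the partial sums $\sum_{j>M}\leb(S_j)\to0$, or alternatively by noting that $\sum_j\leb(S_j)<\infty$ directly yields $\sum_n \leb(\{x\in S_j\text{ for some }j\in[n,2n]\})<\infty$ after grouping dyadically. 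Putting the two pieces together gives $\sum_n\leb(\{R>n\})<\infty$, hence $R\in L^1(\leb)$ and therefore $R\in L^1(\nu)$.

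I expect the main obstacle to be the third step: turning the summability $\sum_j\leb(S_j)<\infty$ of Proposition~\ref{prop.Sn} into a genuinely \emph{summable} bound for $\leb(\{R>n\})$, rather than merely an $O(1/n)$ bound which would not suffice for integrability of $R$. The cleanest route is probably to avoid summing $\{R>n\}$ directly and instead estimate $\int_\Delta R\,d\leb = \sum_{U\in\mathcal P}R|_U\cdot\leb(U)$ by charging the inducing time of each partition element $U\in\mathcal U_k$ to the hyperbolic time $k$ that produced it, and then using that the total Lebesgue measure consumed at step $k$ (partition elements plus satellites created at step $k$) is controlled: from Lemma~\ref{estimativas} and Proposition~\ref{prop.Sn} the "cost" $\sum_{U\in\mathcal U_k}(k+N_0)\leb(U)$ plus the weight carried over in satellites telescopes, and the exponential factors $\sigma^{(n-k)/2}$ make the double sum converge. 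Care is needed because $R|_U = k+m$ with $0\le m\le N_0$ grows linearly in $k$, so one genuinely needs the measure $\sum_{U\in\mathcal U_k}\leb(U)$ of newly-partitioned sets at step $k$ to decay (at least summably when weighted by $k$); this is exactly what a refined version of the satellite estimate, combined with the exponential frequency of hyperbolic times, should deliver.
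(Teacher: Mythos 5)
Your approach is genuinely different from the paper's, and as you yourself suspect, it has gaps that are not easy to close. The paper does not estimate the tails $\leb(\{R>n\})$ at all. Instead it works pointwise with three counting functions: $H^{(n)}(x)$, $S^{(n)}(x)$, $R^{(n)}(x)$, the numbers of hyperbolic times, satellite visits, and full returns of $x$ before time $n$. Every hyperbolic time either produces a return within a bounded number of iterates or lands $x$ in a satellite, so $R^{(n)}+S^{(n)}\geq\kappa H^{(n)}$ for a uniform $\kappa>0$. Dividing by $n$ and using $H^{(n)}/n\geq\theta$ gives $\frac{R^{(n)}}{n}\bigl(1+\frac{S^{(n)}}{R^{(n)}}\bigr)\geq\kappa\theta$. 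Then Birkhoff's theorem for the induced map $F$ and the invariant measure $\nu$ gives $S^{(n)}/R^{(n)}\to\int S\,d\nu<\infty$ (finiteness coming from Proposition~\ref{prop.Sn}) and $n/R^{(n)}\to\int R\,d\nu$, where the latter limit is valid in $[0,\infty]$ whether or not $R$ is integrable. The first convergence forces $R^{(n)}/n\geq\kappa'>0$ for large $n$, and if $\int R\,d\nu$ were infinite the second convergence would force $R^{(n)}/n\to0$; contradiction.

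Two concrete problems with your route. First, your step~2 requires an exponential large-deviations bound $\leb\{\,\#\{j\le n:x\in H_j\}<\theta n/2\,\}\leq Ce^{-cn}$. Lemma~\ref{existhiptimes} only delivers a pointwise $\limsup$-frequency $\geq\theta$; turning that into an exponential bound on the measure of the bad set is a genuine additional input which the paper's hypotheses (NUE and SR as stated, with a $\limsup$ on the Pliss-type condition) do not obviously provide, and you do not prove it. Second, you correctly diagnose that the Chebyshev step gives only $\leb\{R>n\}=O(1/n)$, which is not summable, but the proposed fixes do not close the gap. Dyadic grouping controls $\sum_k\leb\bigl(\bigcup_{j\in[2^k,2^{k+1}]}S_j\bigr)$, but passing from that to $\sum_n\leb\{R>n\}<\infty$ requires in effect $\sum_j j\,\leb(S_j)<\infty$, which Proposition~\ref{prop.Sn} does not assert (it only gives $\sum_j\leb(S_j)<\infty$). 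Your alternative suggestion, charging $R|_U$ to the step $k$ at which $U$ was created and estimating $\sum_k k\sum_{U\in\mathcal U_k}\leb(U)$, runs into the same issue: the construction controls $\sum_k\sum_{U\in\mathcal U_k}\leb(U)\leq\leb(\Delta)$ but not the weighted sum with factor $k$, and there is no obvious decay of $\sum_{U\in\mathcal U_k}\leb(U)$ in $k$ built into the algorithm. The paper's ergodic-theorem argument is precisely designed to avoid needing any such quantitative tail decay: it never estimates $\leb\{R>n\}$, only compares asymptotic frequencies along orbits.
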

\begin{proof}
We first introduce some notation.
For \( x\in \Delta \) we consider the orbit \( x, f(x),..., f^{n-1}(x) \) of the point \( x \) under iteration by \( f \) for some large value of \( n \). In particular \( x \) may undergo several full returns to \( \Delta \) before time \( n \).  Then we define the following quantities:
\begin{align*}
 H^{(n)}(x) &:= \text{number of hyperbolic times for \( x \) before time \( n \)}
 \\
  S^{(n)}(x) &:=\text{number of times \( x \) belongs to a satellite before time \( n \)}
  \\
   R^{(n)}(x)&:= \text{number of returns of \( x \) before time \( n \)}
\end{align*}
% \( R^{(n)}(x) = \max\left\{k: \sum_{i=0}^{k-1} R(F^{i}(x)) \leq n \right\}  \)
Each time that \( x \) has a hyperbolic time, it either then  has a return within some finite and uniformly bounded number of iterations, or by definition it belongs to a satellite. Therefore there exists some constant \( \kappa>0 \) independent of \( x \) and \( n \) such that
\[   R^{(n)}(x)+S^{(n)}(x) \geq \kappa H^{(n)}(x)
 \]
Notice that \( x \) may belong to a satellite or have a return without it having a hyperbolic time itself, since it may belong to a hyperbolic pre-ball of some other point \( y \) which has a hyperbolic time.
Dividing the above equation through by \( n \)  we get
\[
\frac{R^{(n)}(x)}{n}+\frac{S^{(n)}(x)}{n} \geq \frac{\kappa H^{(n)}(x)}{n}
\]
 Recalling that hyperbolic times have uniformly positive asymptotic frequency, there exists a constant \( \theta>0 \) such that \( H^{(n)}(x)/n \geq \theta \) for all \( n \) sufficiently large, and therefore, rearranging the left hand side above gives
 \[
 \frac{R^{(n)}(x)}{n}\left(1+\frac{S^{(n)}(x)}{R^{(n)}(x)}\right) \geq  \kappa\theta > 0
  \]
 Moreover \( S^{(n)}(x)/R^{(n)}(x) \) converges by Birkhoff's ergodic theorem to precisely the average number of times \( \int S d\nu \) that typical points belong to satellites before they return, and from Proposition~\ref{prop.Sn} it follows that \( \int S d\nu < \infty \). Therefore,
 we have
 \begin{equation}
 \label{key}   \frac{R^{(n)}(x)}{n} \geq \kappa' > 0
 \end{equation}
 for all sufficiently large \( n \) where \( \kappa'\) can be chosen arbitrarily close to \(\kappa\theta/(1+\int S d\nu) \) which is independent of \( x \) and $n$.
 To conclude the proof notice that  \( n/R^{(n)}(x) \) is precisely the average return time over the first \( n \) iterations and thus converges by Birkhoff's ergodic theorem to \( \int R d\nu \). This holds even if we do not assume a priori that \( R \) is integrable since it is a positive function and thus \( \int R d\nu \) is always well defined and lack of integrability necessarily implies \( \int R d\nu = + \infty \).
 Thus. arguing by contradiction and assuming that   \( \int Rd\nu = + \infty  \) gives
\( {n}/{R^{(n)}(x)} \to  \int Rd\nu = + \infty \)
and therefore
\( {R^{(n)}(x)}/{n} \to 0\).  This contradicts \eqref{key} and therefore implies that we must have \( \int R d\nu < +\infty \)
 as required.
\end{proof}

% \bibliographystyle{plain}
% \bibliography{bibrefs.bib}

\end{document}